\newcommand{\eps}{{\varepsilon}}
\newcommand{\R}{\mathbb{R}}
\newcommand{\N}{\mathbb{N}}
\renewcommand{\P}{\mathbb{P}}
\newcommand{\E}{\mathbb{E}}
\renewcommand{\d}{\, \mathrm{d}}
\DeclareMathOperator{\var}{Var}
\newcommand{\be}[1]{\begin{equation}\label{#1}}
\newcommand{\ee}{\end{equation}}
\newtheorem{theorem}{Theorem}[section]
\newtheorem{lemma}[theorem]{Lemma}
\newtheorem{remark}[theorem]{Remark}
\begin{document}
\title{Optimally stopping at a given distance from the ultimate supremum of a spectrally negative L\'evy process}
\author{M\'onica B. Carvajal Pinto\footnote{School of Mathematics, University of Manchester. Oxford Road, {\sc Manchester, M13 9PL, United Kingdom.} E-mail: monica.carvajalpinto@manchester.ac.uk/kees.vanschaik@manchester.ac.uk} \quad and \quad Kees van Schaik\footnotemark[1]}
\date{\today}
\maketitle

\begin{abstract} 
\noindent We consider the optimal prediction problem of stopping a spectrally negative L\'evy process as close as possible to a given distance $b \geq 0$ from its ultimate supremum, under a squared error penalty function. Under some mild conditions, the solution is fully and explicitly characterised in terms of scale functions. We find that the solution has an interesting non-trivial structure: if $b$ is larger than a certain threshold then it is optimal to stop as soon as the difference between the running supremum and the position of the process exceeds a certain level (less than $b$), while if $b$ is smaller than this threshold then it is optimal to stop immediately (independent of the running supremum and position of the process). We also present some examples.
\end{abstract}

\medskip 

\noindent {\footnotesize Keywords: spectrally negative L\'evy process, optimal prediction, optimal stopping, scale functions.}

\medskip

\noindent {\footnotesize Mathematics Subject Classification (2000): 60G40, 62M20}

\vspace{0.5cm}

\section{Introduction}\label{sec_intro}

Let $X=(X_t)_{t \geq 0}$ be a spectrally negative L\'evy process starting from $0$ defined on a filtered probability space $(\Omega,\mathcal{F},\mathbf{F},\mathbb{P})$, where $\mathbf{F}=(\mathcal{F}_t)_{t \geq 0}$ is the filtration generated by $X$ which is naturally enlarged (cf. Definition 1.3.38 in \cite{Bichteler02}). We denote its running supremum by
\[ \overline{X}_t := \sup_{s \leq t} X_s \quad \text{for all $t \in [0,\infty]$}. \]
In this paper we consider the optimal prediction problem
\be{eq_main} 
\inf_{\tau} \E \left[ \varphi \left( \overline{X}_\infty-X_\tau \right) \right], 
\ee
where $\varphi: \R_{\geq 0} \to \R$ is a non-negative continuous penalty function and the infimium is taken over all $\mathcal{F}$ adapted stopping times $\tau$. To avoid trivialities we restrict ourselves to the case that $\overline{X}_\infty<\infty$ a.s. and $\E[\varphi(\overline{X}_\infty)]<\infty$. In particular we focus on the case that 
\be{def_pen}
\varphi(x)=(x-b)^2 \quad \text{for some $b>0$}. 
\ee
This means that we are looking for the stopping time $\tau$ such that $X_\tau$ is closest to $\overline{X}_\infty-b$ under a squared error penalty (as commonly used in many areas of optimisation and estimation). 

This is a challenging problem. One reason is that the decision to stop has to be made before the value of the ultimate supremum $\overline{X}_\infty$ is fully known (it is still tractable though due to the homogeneity properties of a L\'evy process). A further complication is that the presence of (unpredictable) negative jumps in $X$ means that a path of $X$ may suddenly jump from high levels to much smaller levels and then even drift off to $-\infty$ before ever returning to anywhere near the previously attained high levels. Our main result fully characterises the solution (under some mild conditions) and shows that there are two types of solutions. If $b$ is smaller than a particular threshold then it is optimal to stop immediately, while if $b$ is larger than this threshold then it is optimal to stop as soon as the difference $\overline{X}_t-X_t$ exceeds a certain level (typically strictly smaller than $b$).

Applications of this problem can for instance be found in (mathematical) finance and insurance. In finance L\'evy processes have received a lot of attention in recent years as an alternative model for the evolution of (the log of) a financial index, extending the classic Black \& Scholes model. See e.g. the textbooks \cite{Schoutens03} \& \cite{Tankov03} and the references therein to name only some. In such a model, the ultimate supremum represents the maximal value the index will attain and hence the problem studied in this paper can be used by an agent to evaluate when is a good (or even the optimal) time to sell shares held in the index. By considering the solution to the problem for different values of $b$, a sequence of `alarms' leading up to the optimal time can be created to inform the agent's investment strategy.

In insurance, a spectrally negative L\'evy process is a popular extension of the classic Cram\'er-Lundberg model modelling the evolution of the surplus associated with a portfolio of products (in the presence of premium income and outgo due to claim payments). See e.g. the textbooks \cite{Asmussen10} \& \cite{Kyprianou13} and the references therein. In this context, the fact that the ultimate supremum is finite means that the so-called Net Profit Condition is not satisfied i.e. on average the portfolio generates a loss per time unit. This may be because the insurer is trying to gain exposure in the consumer market or because a speculative agent is holding the portfolio. The sequence of `alarms' mentioned above can in this application similarly be used by the speculative agent to sell it or for the insurer to start dismantling the portfolio.

An optimisation problem of the type \eqref{eq_main}, where the payoff at any time $t$ i.e. $\varphi (\overline{X}_\infty-X_t)$ is not adapted to $\mathcal{F}_t$ is typically referred to as an optimal prediction problem (and this aspect is exactly what distinguishes optimal prediction problems from the classic field of optimal stopping problems). Such problems have received a lot of attention in recent years, see e.g. \cite{Allaart10, Baurdoux14, Baurdoux16, Bernyk11, Cohen10, DuToit07, DuToit08, DuToit09, Espinosa10, Graversen01}. Several of these papers have considered the problem of predicting the ultimate supremum of a Brownian motion (with drift), also with a finite time horizon, and of the special case of a spectrally positive stable L\'evy process.    

Before discussing the contents of this paper in more detail let us note some facts about L\'evy processes (all of which can be found in \cite{Kyprianou14}, see also \cite{Bertoin96} and \cite{Sato99} e.g.). Recall that a L\'evy process has stationary independent increments and that its law is characterised by a triplet $(\gamma,\sigma,\Pi)$, where $\gamma \in \R$, $\sigma \geq 0$ and $\Pi$ is a measure on $\R \setminus \{0\}$ satisfying the integrability condition
\[ \int_\R (1 \wedge x^2) \Pi(\mathrm{d}x)<\infty. \]
It is spectrally negative when it has no positive jumps i.e. $\Pi(\R_{>0})=0$ and when $-X$ is not a subordinator.

For the spectrally negative L\'evy process $X$, its Laplace exponent
\be{eq_psi} 
\psi(z) = \log \left( \E[e^{zX_1}] \right) = \frac{\sigma^2}{2}z^2 -\gamma z + \int_{(-\infty,0)} \left( e^{zx}-1-zx\mathbf{1}_{\{ x>-1\}} \right) \Pi(\mathrm{d}x) 
\ee
is finite at least for all $z \geq 0$. Further $\psi$ is infinitely often differentiable and strictly convex on the interior of its domain (which is necessarily an interval), with $\psi(0)=0$ and $\psi(\infty)=\infty$. Hence we can define its right inverse as
\[ \Phi(q) = \sup \{ z \geq 0 \, | \, \psi(z)=q \} \quad \text{for all $q \geq 0$.} \]
Further it is well known that 
\be{eq_ult_sup}
\overline{X}_\infty<\infty \text{ a.s.} \iff \lim_{t \to \infty} X_t=-\infty \text{ a.s.} \iff \Phi := \Phi(0)>0. 
\ee
If the conditions in \eqref{eq_ult_sup} hold, which we will assume throughout, we have that $\overline{X}_\infty \sim \text{Exp}(\Phi)$ and $\psi'(0+)=\E[X_1] \in [-\infty,0)$. The condition $\E[\varphi(\overline{X}_\infty)]<\infty$ hence boils down to
\be{eq_cond_varphi}
\int_0^\infty \varphi(x) e^{-\Phi x} \d x<\infty.
\ee

For any $y \geq 0$, we define the process $Y^y=(Y^y_t)_{t \geq 0}$ as the process $X$ reflected in its running supremum, started from level $y$ i.e.
\be{eq_def_Y} 
Y^y_t = (y \vee \overline{X}_t) - X_t \quad \text{for all $t \geq 0$.} 
\ee
It is well known that $Y^y$ is a strong Markov process. Note that under the standing assumption \eqref{eq_ult_sup} we have that $Y^y_t \to \infty$ a.s. as $t \to \infty$. An interpretation of $y>0$ in \eqref{eq_def_Y} is that the process $X$ was started at some time prior to $t=0$ and that at $t=0$ we observe the reflected distance to be $y=\overline{X}_0-X_0$.

The rest of this paper is organised as follows. In Section \ref{sec_main} we state and discuss our main results. We first use standard arguments to express the optimal prediction problem \eqref{eq_main} as an equivalent optimal stopping problem driven by the above process $Y^y$. After showing that the problem is trivial when $\varphi$ is non-decreasing we turn to the case of the quadratic penalty function \eqref{def_pen}, and fully characterise and discuss the solution for that case in Theorem \ref{thm_main}. Section \ref{sec_proofs} contains a proof of Theorem \ref{thm_main} (which is a bit of work).  Finally, in Section \ref{sec_exam} we look at some specific examples of spectrally negative L\'evy processes and further illustrate the results from Theorem \ref{thm_main}.

\section{Main results and discussion}\label{sec_main}

Recall that throughout we assume that $X$ satisfies the conditions in \eqref{eq_ult_sup} and that \eqref{eq_cond_varphi} holds. In the first lemma we show, using standard arguments, that the optimal prediction problem \eqref{eq_main} can be expressed as an optimal stopping problem driven by the process $Y^0$.

\begin{lemma}\label{lem_rewr} Let the non-negative function $H$ be given by
\be{def_H} 
H(y) = \varphi(y) \left( 1-e^{-\Phi y} \right) + \int_y^\infty \varphi(z)\Phi e^{-\Phi z} \d z \quad \text{for all $y \geq 0$} 
\ee
and further define the function $V$ as
\be{def_V} 
V(y) = \inf_\tau \E[H(Y^y_\tau)] \quad \text{for all $y \geq 0$} 
\ee
where the infimum is taken over all stopping times with respect to the naturally enlarged filtration generated by $Y^y$.

Then $V(0)$ equals \eqref{eq_main}.
\end{lemma}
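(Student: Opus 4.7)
The plan is to convert the non-adapted payoff $\varphi(\overline{X}_\infty - X_\tau)$ into an $\mathcal{F}_\tau$-measurable function of $Y^0_\tau$, exploiting the strong Markov property of $X$ together with the fact that $\overline{X}_\infty\sim\mathrm{Exp}(\Phi)$ (from \eqref{eq_ult_sup}); the resulting function will be read off as precisely $H$.

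I would begin by fixing an $\mathbf{F}$-stopping time $\tau$; since $\varphi\geq 0$ and $X_\infty=-\infty$ a.s., $\tau<\infty$ a.s.\ may be assumed (otherwise the expectation is $+\infty$ on $\{\tau=\infty\}$ and such $\tau$ cannot contribute to the infimum). By the strong Markov property, the shifted process $\widetilde{X}:=(X_{\tau+s}-X_\tau)_{s\geq 0}$ is independent of $\mathcal{F}_\tau$ and equal in law to $X$. Setting $\widetilde{M}:=\sup_{s\geq 0}\widetilde{X}_s\sim\mathrm{Exp}(\Phi)$, the natural pathwise decomposition of the ultimate supremum yields
\[ \overline{X}_\infty \;=\; \overline{X}_\tau \vee \bigl(X_\tau+\widetilde{M}\bigr), \qquad \overline{X}_\infty - X_\tau \;=\; Y^0_\tau \vee \widetilde{M}. \]
Conditioning on $\mathcal{F}_\tau$ and using the independence of $\widetilde{M}$ then gives
\[ \E\bigl[\varphi(\overline{X}_\infty-X_\tau)\bigm|\mathcal{F}_\tau\bigr] \;=\; g(Y^0_\tau), \qquad g(y):=\int_0^\infty \varphi(y\vee z)\,\Phi e^{-\Phi z}\d z, \]
and a routine split of the integral at $z=y$ identifies $g$ with the function $H$ in \eqref{def_H}. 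Taking expectations and the infimum over $\tau$ produces
\[ \inf_{\tau\in\mathbf{F}}\E\bigl[\varphi(\overline{X}_\infty-X_\tau)\bigr] \;=\; \inf_{\tau\in\mathbf{F}}\E\bigl[H(Y^0_\tau)\bigr]. \]

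What remains is to identify this with $V(0)$, where the infimum is restricted to stopping times in the smaller natural filtration $\mathbf{F}^{Y^0}$. The inequality $V(0)\geq\inf_{\tau\in\mathbf{F}}\E[H(Y^0_\tau)]$ is immediate since $\mathbf{F}^{Y^0}\subseteq\mathbf{F}$; the reverse inequality is the standard fact from the theory of optimal stopping for Markov processes that, because $Y^0$ is strong Markov in its own filtration and the reward $H(Y^0_\cdot)$ is adapted to $\mathbf{F}^{Y^0}$, enlarging the filtration does not lower the value. The main substantive step in the whole argument is the pathwise supremum decomposition together with the elementary integral identity $g\equiv H$; the filtration reduction is essentially a citation to the general Markovian optimal stopping theory.
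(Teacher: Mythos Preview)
Your proposal is correct and follows the same overall strategy as the paper: compute the conditional expectation $\E[\varphi(\overline{X}_\infty-X_\tau)\mid\mathcal{F}_\tau]$ and identify it as $H(Y^0_\tau)$. The execution differs slightly. The paper first establishes the identity at deterministic times $t$ (citing the proof of Lemma~1 in \cite{DuToit08}), writing the conditional expectation as $\E[\varphi(y+(\overline{X}_\infty-y)^+)]|_{y=\overline{X}_t-X_t}$, and then invokes Hunt's Lemma to pass to stopping times; you instead apply the strong Markov property directly at $\tau$ via the pathwise decomposition $\overline{X}_\infty-X_\tau=Y^0_\tau\vee\widetilde{M}$ with $\widetilde{M}\sim\mathrm{Exp}(\Phi)$ independent of $\mathcal{F}_\tau$. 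Your route is a bit more self-contained (no appeal to Hunt's Lemma), and you also make explicit the filtration-reduction step from $\mathbf{F}$ to $\mathbf{F}^{Y^0}$, which the paper leaves implicit. One small caveat: your dismissal of stopping times with $\P(\tau=\infty)>0$ relies on $\varphi(\infty)=\infty$; for general $\varphi$ with $\varphi(\infty)<\infty$ (cf.\ Remark~\ref{rem_stopp_inf}) the identity $H(\infty)=\varphi(\infty)$ still makes the argument go through, so this is easily patched.
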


\begin{proof} Since $\varphi$ is non-negative, we can assume in the below w.l.o.g. that $\varphi$ is bounded (the result can be extended using monotone convergence otherwise). We closely follow the proof of Lemma 1 in \cite{DuToit08}, first to establish for any $t \geq 0$ that
\[ \E \left[ \left. \varphi \left( \overline{X}_\infty-X_t \right) \, \right| \, \mathcal{F}_t \right] = \E \left[ \varphi \left( y+ (\overline{X}_\infty-y)^+ \right) \right] \Big|_{y=\overline{X}_t-X_t}. \]
Using that $\overline{X}_\infty \sim \text{Exp}(\Phi)$ (cf. \eqref{eq_ult_sup}), a straightforward computation shows that the above right hand side equals $H(\overline{X}_t-X_t)$. An application of Hunt's Lemma (cf. e.g. E14.1 in \cite{Williams91}) now yields the result.
\end{proof}

Note that for $y>0$, we can understand $V(y)$ to be equivalent to \eqref{eq_main} in the situation that $X$ was started at some time prior to $t=0$ and at $t=0$ we observe the reflected distance to be $y=\overline{X}_0-X_0$.

\begin{remark}\label{rem_stopp_inf} We only consider penalty functions $\varphi$ for which $\varphi(\infty)$ exists (possibly infinite). Then $H(\infty)$ exists as well, and since both $X$ and $Y^y$ have well defined limits as $t \to \infty$ we can allow for $[0,\infty]$ valued stopping times both in \eqref{eq_main} and \eqref{def_V}.
\end{remark}

It is not hard to show that if the penalty function $\varphi$ is non-decreasing then it is for $y=0$ optimal to stop immediately. Apparently the homogeneity of $X$, also keeping in mind that $X$ drifts to $-\infty$, guarantees that waiting is suboptimal and that \eqref{eq_main} is equal to $\E[\varphi (\overline{X}_\infty)]$.

\begin{lemma}\label{lem_varphi_nondecr} If $\varphi$ is non-decreasing then $\tau=0$ is optimal in \eqref{eq_main}.
\end{lemma}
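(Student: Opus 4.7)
The plan is to reduce, via Lemma \ref{lem_rewr}, to showing that for $V(y) = \inf_\tau \E[H(Y^y_\tau)]$ the choice $\tau = 0$ is optimal at $y = 0$. Since $Y^0_0 = 0$, this amounts to proving that $H$ is non-decreasing on $[0,\infty)$: once that is established, for any stopping time $\tau$ we have $Y^0_\tau \geq 0$ almost surely (by the definition \eqref{eq_def_Y}), hence $H(Y^0_\tau) \geq H(0)$, and taking expectations gives $\E[H(Y^0_\tau)] \geq H(0) = \E[H(Y^0_0)]$.

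To see that $H$ is non-decreasing, I would use the probabilistic representation that is implicit in the proof of Lemma \ref{lem_rewr}, namely
\[ H(y) = \E\bigl[ \varphi\bigl( y + (\overline{X}_\infty - y)^+ \bigr) \bigr] = \E\bigl[ \varphi( y \vee \overline{X}_\infty ) \bigr], \]
where the second equality is the elementary identity $y + (a-y)^+ = y \vee a$. The first equality can either be quoted from the proof of Lemma \ref{lem_rewr} or verified directly by splitting on $\{\overline{X}_\infty \leq y\}$ and $\{\overline{X}_\infty > y\}$ and using $\overline{X}_\infty \sim \mathrm{Exp}(\Phi)$ from \eqref{eq_ult_sup}. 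With $H$ written this way, the claim is immediate: as $y$ increases, $y \vee \overline{X}_\infty$ increases pointwise, and $\varphi$ non-decreasing then makes $\varphi(y \vee \overline{X}_\infty)$ pointwise non-decreasing in $y$, so monotonicity of the expectation gives that $H$ itself is non-decreasing.

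Combining the two steps yields $V(0) = H(0)$ with the infimum attained at $\tau = 0$, and translating back via Lemma \ref{lem_rewr} shows $\tau = 0$ is optimal in \eqref{eq_main} (with optimal value $\E[\varphi(\overline{X}_\infty)]$). There is no real obstacle here: the substance of the argument is the identification $H(y) = \E[\varphi(y \vee \overline{X}_\infty)]$, which reveals monotonicity of $H$ as a trivial consequence of monotonicity of $\varphi$; everything else is a one-line comparison using $Y^0_\tau \geq 0$.
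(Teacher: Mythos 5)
Your proof is correct, and while it follows the paper's overall structure (reduce via Lemma \ref{lem_rewr} to showing $H$ is non-decreasing, then conclude since $Y^0_\tau \geq 0$ a.s.), the key step is handled by a genuinely different and arguably cleaner argument. The paper establishes monotonicity of $H$ analytically: integration by parts on $\int_{y_1}^{y_2}\varphi(z)\Phi e^{-\Phi z}\d z$ gives $H(y_2)-H(y_1)=\varphi(y_2)-\varphi(y_1)-\int_{y_1}^{y_2}e^{-\Phi z}\d\varphi(z)$, and the last integral is bounded by $\varphi(y_2)-\varphi(y_1)$ because $e^{-\Phi z}\leq 1$ and $\d\varphi\geq 0$. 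You instead exploit the probabilistic representation $H(y)=\E[\varphi(y+(\overline{X}_\infty-y)^+)]=\E[\varphi(y\vee\overline{X}_\infty)]$ (which is correct: splitting on $\{\overline{X}_\infty\leq y\}$ recovers exactly \eqref{def_H}), so that monotonicity of $H$ is an immediate pointwise consequence of monotonicity of $\varphi$. Your route makes the result transparent and sidesteps the Lebesgue--Stieltjes interpretation of $\d\varphi$ needed in the paper's computation when $\varphi$ is merely continuous and non-decreasing; the paper's route stays entirely within the explicit formula for $H$ and needs no appeal to the representation from the proof of Lemma \ref{lem_rewr}. Both are complete.
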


\begin{proof} By virtue of Lemma \ref{lem_rewr} above, since $Y^0_0=0$ it suffices to show that $H$ is non-decreasing. For this, take any $0 \leq y_1 \leq y_2$ and note that integration by parts yields
\[ \int_{y_1}^{y_2} \varphi(z) \Phi e^{-\Phi z} \d z = -\varphi(y_2) e^{-\Phi y_2}+\varphi(y_1) e^{-\Phi y_1} + \int_{y_1}^{y_2} e^{-\Phi z} \mathrm{d}\varphi(z) \]
so that 
\[ H(y_2)-H(y_1) = \varphi(y_2) - \varphi(y_1) - \int_{y_1}^{y_2} e^{-\Phi z} \mathrm{d}\varphi(z). \]
Since $\varphi$ is non-decreasing we have that
\[ \int_{y_1}^{y_2} e^{-\Phi z} \mathrm{d}\varphi(z) \leq \int_{y_1}^{y_2} \mathrm{d}\varphi(z) = \varphi(y_2) - \varphi(y_1) \]
and hence indeed $H(y_2)-H(y_1) \geq 0$.
\end{proof}

\begin{remark} Of course, the above Lemmas \ref{lem_rewr} \& \ref{lem_varphi_nondecr} do not rely on $X$ being spectrally negative. In particular, $\tau=0$ is optimal in \eqref{eq_main} for any L\'evy process whose ultimate supremum is finite a.s.
\end{remark}

Now we turn to the more interesting case of a quadratic penalty function of the form 
\be{def_quad_pen} 
\varphi(x) =(x-b)^2 \quad \text{for some $b > 0$,}
\ee
in which case $H$ as defined in \eqref{def_H} boils down to
\be{def_quad_H}
H(y) = (y-b)^2+\frac{2}{\Phi} e^{-\Phi y} \left( y-b + \frac{1}{\Phi} \right).
\ee 
Note that the shape of $\varphi$ is preserved in $H$ in the sense that $H'$ is $<0$, $=0$, $>0$ for $y \in (0,b)$, $y=b$, $y>b$ resp. with $H(0)<\infty$ and $H(\infty)=\infty$. Hence it makes sense to expect that in the optimal stopping problem \eqref{def_V} it is optimal to stop when $Y^y$ is close to $b$ i.e. when the distance between the running supremum and the position of $X$ is close to $b$.

However, keeping in mind that $Y^y$ drifts to $\infty$, for any $y \in (b,\infty)$ it is not obvious whether it is better to stop and accept the payoff $H(y)>H(b)$ or to wait until $Y^y$ moves closer to $b$ while risking that it drifts only further away from $b$. Further, for any $y \in [0,b)$ there is a similar dilemma, where waiting comes for $Y^y$ to move closer to $b$ comes with the risk of experiencing a positive jump taking $Y^y$ (far) over the level $b$ to more unfavourable payoffs. So the structure of the solution is not very easy to guess.

Before presenting our main result we make two assumptions in addition to the conditions in \eqref{eq_ult_sup}, namely:
\begin{enumerate}
\item[(A1)] there exists $z_0<0$ such that $\psi(z_0)<\infty$,
\item[(A2)] if $X$ has bounded variation then $\Pi$ has no atoms.
\end{enumerate}
Recall that (A1) is a restriction on the `large jumps' in the sense that it is equivalent to 
\[ \int_{(-\infty,-1)} e^{z_0 x} \Pi(\mathrm{d}x)<\infty \]
(cf. Theorem 3.8 in \cite{Kyprianou14}). It guarantees that the domain of $\psi$ (on the interior of which it is infinitely often differentiable and strictly convex) contains $[z_0,\infty)$. In particular it implies that all the integer moments of $X_1$ are finite, and $\psi^{(k)}(0)$ equals the $k$-th cumulant of $X_1$. Assumption (A2) is needed to exclude some pathological cases in the proofs.

Further recall that there exist families of scale functions denoted $W^{(q)}: \R \to [0,\infty)$ and $Z^{(q)}: \R \to [1,\infty)$ for $q \geq 0$ associated with $X$, where $W^{(q)}(x)=0$ for $x<0$ while on $[0,\infty)$ it is continuous, strictly increasing and uniquely characterised by
\[ \int_0^\infty e^{-\beta x} W^{(q)}(x) \d x = \frac{1}{\psi(\beta)-q} \quad \text{for $\beta>\Phi(q)$,} \]
and
\[ Z^{(q)}(x) = 1+q \int_0^x W^{(q)}(y) \d y \quad \text{for $x \in \R$.} \]
For simplicity we denote $W:=W^{(0)}$ and $Z:=Z^{(0)}$. These scale functions are commonly used to express quantities involving one- and two-sided exit problems for $X$, see e.g. Chapter 8 in \cite{Kyprianou14}. Explicit expressions (or numerical algorithms) exist for many cases, cf. e.g. \cite{Hubalek10} (see also Section \ref{sec_exam} below).

Here is our main result, which fully characterises the solution to the optimal stopping problem \eqref{def_V}.  

\begin{theorem}\label{thm_main} Consider the optimal stopping problem \eqref{def_V} for $H$ given by \eqref{def_quad_H}. Suppose that assumptions (A1) and (A2) above hold in addition to the conditions in \eqref{eq_ult_sup}. Further we denote for $y \geq 0$ and $a \geq 0$ the stopping time
\[ \tau^y_a = \inf \{ t \geq 0 \, | \, Y^y_t \geq a \} \]
and we set
\[ I_1(a)=\int_0^a W(x) \d x, \quad I_2(a)=\int_0^a xW(x) \d x \quad \text{and} \quad
I_3(a)=\int_0^a e^{-\Phi x} W(x) \d x. \]
We have the following two cases.
\begin{enumerate}[label=(\roman*)]
\item If
\[ b \leq \frac{\psi'(\Phi)/\Phi-\psi''(0)/2}{\psi'(0)} \]
then for any $y \geq 0$ it holds that $\tau=0$ is optimal in \eqref{def_V} and hence $V=H$.
\item If 
\[ b > \frac{\psi'(\Phi)/\Phi-\psi''(0)/2}{\psi'(0)} \]
then there exists $a^* \in (0,b]$ so that for any $y \geq 0$ the stopping time $\tau^y_{a^*}$ is optimal in \eqref{def_V}. A characterisation of $a^*$ is given in Lemma \ref{lem_g}. Further $V$ is continuous and can be expressed as
\[ V(y) = \begin{cases}
H(y) - I_1(a^*-y) \big( 2\psi'(0)(b-y)+\psi''(0) \big) & \\
\qquad +2\psi'(0)I_2(a^*-y)+2\psi'(\Phi)e^{-\Phi y} I_3(a^*-y)/\Phi & \text{if $y \in [0,a^*)$} \\
H(y) & \text{if $y \in [a^*,\infty)$.}
\end{cases} \]
\end{enumerate}
\end{theorem}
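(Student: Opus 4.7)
The plan is to follow the standard guess-and-verify strategy for optimal stopping of Markov processes. Since the process $Y^y$ has only upward jumps and drifts to $+\infty$, while $H$ decreases strictly on $[0,b)$, attains its minimum at $y=b$, and then increases strictly to $+\infty$, it is natural to conjecture that the continuation region is of the form $[0,a^*)$ for some threshold $a^* \in (0,b]$ and that $\tau^y_{a^*}$ is optimal. I would thus first compute $V_a(y) := \E[H(Y^y_{\tau^y_a})]$ for each fixed $a > 0$ and $y \in [0,a)$, then determine the optimal threshold $a^*$, and finally verify global optimality.

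For the first step, I would invoke the well-known fluctuation identities for spectrally negative L\'evy processes expressed in terms of the scale functions $W$ and $Z$. The joint law of the running supremum and the overshoot at the first time $Y^y$ exceeds $a$ can be described using $W$ and the L\'evy measure $\Pi$. Substituting the quadratic-plus-exponential shape of $H$ in \eqref{def_quad_H} and separating the integrands into the three natural building blocks $1$, $x$, and $e^{-\Phi x}$ yields the quantities $I_1$, $I_2$, and $I_3$, after simplification using integration by parts and the defining Laplace transform $\int_0^\infty e^{-\beta x}W(x)\, \mathrm{d}x = 1/\psi(\beta)$. This delivers the closed-form expression for $V(y)$ stated in the theorem.

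Having $V_a$ in closed form, I would identify $a^*$ via a continuous/smooth-fit condition at the boundary $y=a$. Differentiating the map $a \mapsto V_a(0)$ and setting the derivative to zero (equivalently, matching one-sided derivatives of $V_a$ and $H$ at $a$) produces a scalar equation $g(a^*)=0$ as promised by Lemma \ref{lem_g}. An analysis of $g$ near zero reveals that $g(0)$ has the same sign as $b\psi'(0)-\psi'(\Phi)/\Phi+\psi''(0)/2$, which gives exactly the dichotomy of the theorem: in case (i) the minimiser over thresholds is $a^*=0$, so immediate stopping wins; in case (ii), a unique root $a^* \in (0,b]$ exists by a monotonicity argument combined with the behaviour of $g$ at infinity.

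The main obstacle will be the verification, where one must show both that (a) the candidate $V_{a^*}$ is everywhere dominated by $H$ and that (b) $V_{a^*}$ is superharmonic for $Y^y$. Property (a) is automatic on $[a^*,\infty)$, while on $[0,a^*)$ it reduces, via the closed-form formula, to showing that the correction term has the correct sign; this follows by combining $g(a^*)=0$ with monotonicity properties of $I_1,I_2,I_3$. For (b), on $[0,a^*)$ the candidate is harmonic by construction (as an expectation up to a first exit time); the work is on $[a^*,\infty)$ where $V_{a^*}=H$ and one must verify that the infinitesimal generator of $Y$ applied to $H$, computed explicitly from \eqref{eq_psi}, is non-negative. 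The sign of this generator application is controlled precisely by the scalar $b\psi'(0)-\psi'(\Phi)/\Phi+\psi''(0)/2$, which explains the form of the threshold in the theorem. Case (i) then reduces to the observation that the same computation yields non-negativity of the generator of $H$ globally on $[0,\infty)$, so $H$ itself is superharmonic and $V=H$. Assumption (A1) secures the integrability needed to take derivatives of $\psi$ and to bound the generator of $H$, while (A2) rules out pathological jumps of $Y$ at the boundary $a^*$ in the bounded-variation case, ensuring that continuous fit holds without boundary corrections. Once (a) and (b) are established, a standard Snell-envelope/verification argument concludes the proof.
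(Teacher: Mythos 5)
Your computational skeleton matches the paper's: the explicit formula for $f_a(y)=\E[H(Y^y_{\tau^y_a})]$ via scale functions (Lemma \ref{lem_fa}, obtained there by differentiating the Laplace transform from Theorem 8.10 of \cite{Kyprianou14}), the equation $g(a^*)=0$ with the dichotomy governed by the sign of $g_2(0)=\psi''(0)+2b\psi'(0)-2\psi'(\Phi)/\Phi$ (Lemma \ref{lem_g}), and the generator computation $\mathcal{A}f^y(l)=\psi''(0)-\tfrac{2}{\Phi}e^{-\Phi(y-l)}\psi'(\Phi)-2(y-l-b)\psi'(0)$ which explains the threshold (Lemma \ref{lem_Z^y}). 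However, your verification step has a genuine gap. You claim that $f_{a^*}\leq H$ on $[0,a^*)$ ``follows by combining $g(a^*)=0$ with monotonicity properties of $I_1,I_2,I_3$.'' Writing the difference as $f_{a^*}(y)-H(y)=-\int_y^{a^*}W(z-y)\,g_2(z)\d z$ shows why this is not a routine sign check: $g_2$ is negative on $[0,y^*)$ and positive on $(y^*,a^*]$, so the integrand changes sign inside the continuation region, and the constraint $g(a^*)=0$ controls the integral of $g_2$ against the weight $W'$, not against $W(\cdot-y)$. For the same reason the fallback of comparing $f_{a^*}(Y_{t\wedge\tau_{a^*}})$ (a martingale) with $H(Y_{t\wedge\tau_{a^*}})$ fails, since $H$ is neither super- nor subharmonic throughout $[0,a^*)$. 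The paper does not prove this inequality directly; it establishes $[0,a^*)\subseteq\mathcal{C}$ through Lemmas \ref{lem_stop1}--\ref{lem_opt1}, which need the excursion-theoretic fact that $W/W'$ is non-decreasing (Lemma 2.3 in \cite{Kuznetsov12}) and continuous/smooth-fit contradiction arguments at a hypothetical boundary $a_0\neq a^*$. Some substitute for these ingredients is needed; your proposal supplies none.

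Two further points in the ``standard Snell-envelope/verification argument'' are not standard here and must be addressed. First, $H(Y^y_t)\to\infty$ a.s., so the integrability hypotheses of off-the-shelf verification theorems fail; the paper localises with $T_n\wedge t\wedge\tau^y_a$ and passes to the limit by dominated and monotone convergence (see Lemma \ref{lem_stop1}), and a global submartingale claim for $f_{a^*}(Y^y_t)$ would need the same care. Second, a global generator/It\^o argument for the reflected process $Y^y$ must handle the boundary behaviour at $0$ (the normal-reflection condition $f_{a^*}'(0+)=0$), which you do not mention; the paper deliberately avoids this by only ever applying It\^o's formula to $H(y-X_t)$ stopped before $Y^y$ reaches $0$ (Remark \ref{rem_XeqY}), and instead derives optimality of the first entry into $\mathcal{S}=\{V=H\}$ from general theory after proving continuity of $V$ (Lemma \ref{lem_CS}). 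A minor slip: for a minimisation problem the candidate should generate a submartingale (your subsequent requirement $\mathcal{A}H\geq 0$ on the stopping region is the correct one, so this is terminology only). Your reduction of case (i) to global non-negativity of $\mathcal{A}H$ is correct and matches Lemmas \ref{lem_Z^y}(i) and \ref{lem_opt2}.
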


A proof is provided in Section \ref{sec_proofs}. Case (i) is covered by Lemmas \ref{lem_CS} \& \ref{lem_opt2}, while for case (ii) Lemmas \ref{lem_CS} \& \ref{lem_opt1} give the optimal stopping time and the expression for $V$ follows from Lemma \ref{lem_fa}.

This result shows that in both cases (i) and (ii), if the process $Y^y$ starts in $(b,\infty)$ (i.e. $y \in (b,\infty)$) then it is best to stop immediately. That is to say, it is not worth waiting for $Y^y$ to decrease closer to $b$ due to the risk of it moving upwards rather. Further in case (ii), if the process $Y^y$ starts in $(0,a^*)$ then it is optimal to wait until $Y^y$ gets closer to $b$ and to stop only when it first enters $[a^*,\infty)$. Note that it is indeed very well possible that $a^*<b$ (see also Section \ref{sec_exam}), in this situation it is optimal to stop when $Y^y$ is close enough to $b$ rather than waiting until $Y^y$ actually hits the level $b$ and risking that a positive jump takes $Y^y$ (deep) into $(b,\infty)$.

The distinction between the two cases in the result is also interesting. It can be verified that 
\be{eq_9Apr1} 
\frac{\psi'(\Phi)/\Phi-\psi''(0)/2}{\psi'(0)} 
\ee
is non-negative, and that it vanishes if and only if $X$ has no jumps i.e. when $X$ is a Brownian motion with negative drift (cf. Lemma \ref{lem_bound}). Indeed, if $X$ has no jumps then neither has $Y^y$ and therefore it is obvious that while $Y^y$ lives in $[0,b)$ it is best to wait for it to hit $b$ (which it will a.s.). The above result confirms this since case (ii) always applies in this situation (with $a^*=b$). On the other hand, if $X$ does have jumps then $b$ has to be large enough (or: the difference between $H(y)$ for $y$ close to $0$ and $H(b)$ has to be large enough) for the benefit of waiting for $Y^y$ to get closer to $b$ to outweigh the risk of a jump taking $Y^y$ (far) beyond $b$.

Finally it is worth noting that we would expect that the above result remains valid under the weaker condition that only the first moment of $X_1$ is guarateed to be finite. It would then seem that the same two cases remain present as long as the second moment is also finite, but that when the second moment is infinite then \eqref{eq_9Apr1} is also infinite due to $\psi'(0)<0$ and $\psi''(0)=\var(X_1)=\infty$ and hence only the first case is present. This is of course due to the quadratic form of the penalty function.

\section{Remaining proofs}\label{sec_proofs}

This section contains a proof of Theorem \ref{thm_main}, broken down into a number of lemmas. We use the notation introduced above and in the statement of Theorem \ref{thm_main} throughout.

The proof is a bit hairy since the optimal stopping problem \eqref{def_V} involves a payoff function $H$ that is not monotone or convex/concave (often for payoff functions with such properties more straightforward arguments are possible) and further since $H(Y^y_t) \to \infty$ a.s. as $t \to \infty$ the integrability conditions that are typically assumed to formulate general results in optimal stopping theory do not hold in this case.

Recall that we are working under the assumption that the conditions in \eqref{eq_ult_sup} hold, and that (A1) and (A2) from Section \ref{sec_main} hold. Using (A1), in the interior of the domain of $\psi$, which contains the point $0$, we get from \eqref{eq_psi} that
\begin{multline}\label{eq_psi_der}
\psi'(z)=\sigma^2 z-\gamma+\int_{(-\infty,0)} \left( xe^{zx}-x\mathbf{1}_{\{ x>-1 \}} \right) \Pi(\mathrm{d}x), \\ 
\psi''(z)=\sigma^2+\int_{(-\infty,0)} x^2 e^{zx} \Pi(\mathrm{d}x) \quad \text{and} \quad \psi'''(z)=\int_{(-\infty,0)} x^3 e^{zx} \Pi(\mathrm{d}x).
\end{multline}

Briefly returning to the scale functions introduced in Section \ref{sec_main}, from Lemma 8.6 in \cite{Kyprianou14} we know that $W(0)=0$ if $X$ has unbounded variation and $W(0)>0$ otherwise. Further, using (A2) we know that $W \in C^1(0,\infty)$ and $W'>0$ on $(0,\infty)$, and the right derivative in $0$ exists which with a slight abuse of notation we denote by $W'(0) \in (0,\infty]$ (cf. Lemma 8.2 and the discussion following it, and Exercise 8.5, both in \cite{Kyprianou14}). 

Further, for twice continuously differentiable functions $f:\R\to\R$ we define 
\be{Afx} 
\mathcal{A}f(y) = \frac{\sigma^2}{2} f''(y)+\gamma f'(y) + \int_{\R} \big( f(y+x)-f(y)+x f'(y) \mathbf{1}_{\{ |x|<1 \}} \big) \Pi(\mathrm{d}x). 
\ee
If $f$ and $\Pi$ are such that the integral in \eqref{Afx} is finite, It\^o's formula (cf. e.g. Theorem 4.3 in \cite{Kyprianou14}) yields for all $t \geq 0$
\begin{align}
f(X_t) &= f(X_0)+ \int_0^t f'(X_{s^-})\d X_s+\frac{\sigma^2}{2}\int_0^tf''(X_s)\d s \notag\\
& \qquad +\int_{[0,t]}\int_{\R}\left(f(X_{s^-}+x)-f(X_{s^-})-xf'(X_{s^-})\right)N(\mathrm{d} s\times \mathrm{d} x) \notag\\
&=f(X_0) + M_t + \int_0^t \mathcal{A}f(X_{s^-}) \d s, \label{K_decompo}
\end{align}
where $N$ is the Poisson random measure associated with the jumps of $X$ and
\begin{align*}
M_t=&\int_0^tf'(X_{s^-})\d \left(X_s+\gamma s-\int_{[0,s]}\int_{|x|\geq 1}xN(\mathrm{d}u\times \mathrm{d}x)\right)\\
& \qquad +\int_{[0,t]}\int_{\R}\left(f(X_{s^-}+x)-f(X_{s^-})-x\mathbf{1}_{\{|x|<1\}}f'(X_{s^-})\right)N(\mathrm{d} s\times \mathrm{d} x)\\
& \qquad -\int_{[0,t]}\int_{\R}\left(f(X_{s^-}+x)-f(X_{s^-})-x\mathbf{1}_{\{|x|<1\}}f'(X_{s^-})\right)\Pi(\mathrm{d}x) \d s \quad \text{for all $t \geq 0$.}
\end{align*}
By the L\'evy-It\^o decomposition the first integral in the expression for $M$ is a local martingale as its integrator is a martingale. Under the condition 
\[\E\left[\int_{[0,t]}\int_{\R}f(X_{s^-}+x)-f(X_{s^-})-x\mathbf{1}_{\{|x|<1\}}f'(X_{s^-})\Pi(\mathrm{d}x) \d s\right]< \infty\]
the Compensation Formula (see e.g. Theorems 4.3 \& 4.4 and Corollary 4.6 in \cite{Kyprianou14}) can be invoked to conclude that $M$ is a local martingale.

Now, the first three lemmas below make up some preparation.

\begin{lemma}\label{lem_bound} The quantity
\be{9Apr2} 
\frac{\psi'(\Phi)/\Phi-\psi''(0)/2}{\psi'(0)} 
\ee
vanishes when $\Pi(\R_{<0})=0$ and is strictly positive otherwise.
\end{lemma}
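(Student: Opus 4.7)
The plan is to express the numerator $\psi'(\Phi)/\Phi - \psi''(0)/2$ as an integral of $\psi'''$ against a nonnegative kernel on $[0,\Phi]$, and then to read off its sign from
\[
\psi'''(u) \;=\; \int_{(-\infty,0)} x^{3}\,e^{ux}\, \Pi(\mathrm{d}x)
\]
(cf.\ \eqref{eq_psi_der}), whose integrand is strictly negative on the support of the negative-jump measure. Consequently $\psi''' \le 0$ everywhere, with a strict inequality precisely when $\Pi(\R_{<0})>0$. Note also that assumption (A1) together with \eqref{eq_ult_sup} ensure $\psi'(0) \in (-\infty, 0)$, so the sign of \eqref{9Apr2} is opposite to that of its numerator.

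The Brownian case $\Pi(\R_{<0})=0$ is a direct verification. Here $X$ must have $\sigma>0$ (else $\psi$ is linear, forcing $\Phi=0$ and contradicting \eqref{eq_ult_sup}), so $\psi(z) = \tfrac{1}{2}\sigma^{2}z^{2}-\gamma z$ with $\gamma>0$, whence $\Phi = 2\gamma/\sigma^{2}$, $\psi'(\Phi)=\gamma$ and $\psi''(0)=\sigma^{2}$, giving $\psi'(\Phi)/\Phi - \psi''(0)/2 = \sigma^{2}/2 - \sigma^{2}/2 = 0$ as claimed.

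For the main case I would apply Taylor's theorem with integral remainder to both $\psi$ and $\psi'$. Using $\psi(0)=\psi(\Phi)=0$, the third-order expansion of $\psi$ at $0$ gives
\[
0 \;=\; \psi'(0)\Phi + \tfrac{1}{2}\psi''(0)\Phi^{2} + \int_{0}^{\Phi} \tfrac{(\Phi-u)^{2}}{2}\,\psi'''(u)\, \mathrm{d}u,
\]
while the second-order expansion of $\psi'$ at $0$ gives
\[
\psi'(\Phi) \;=\; \psi'(0) + \psi''(0)\Phi + \int_{0}^{\Phi} (\Phi-u)\,\psi'''(u)\, \mathrm{d}u.
\]
Dividing the first equation by $\Phi^{2}$ and the second by $\Phi$, and eliminating $\psi'(0)/\Phi$ between them, the third-derivative contributions combine with kernel
\[
\frac{\Phi-u}{\Phi} - \frac{(\Phi-u)^{2}}{2\Phi^{2}} \;=\; \frac{\Phi^{2}-u^{2}}{2\Phi^{2}},
\]
yielding the clean identity
\[
\frac{\psi'(\Phi)}{\Phi} - \frac{\psi''(0)}{2} \;=\; \int_{0}^{\Phi} \psi'''(u)\,\frac{\Phi^{2}-u^{2}}{2\Phi^{2}}\, \mathrm{d}u.
\]

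When $\Pi(\R_{<0})>0$ the kernel is strictly positive on $(0,\Phi)$ while $\psi'''$ is strictly negative throughout $[0,\Phi]$, so the right-hand side is strictly negative; dividing by $\psi'(0)<0$ then gives strict positivity of \eqref{9Apr2}, as required. I do not foresee a serious obstacle: the only non-routine step is spotting the pair of Taylor remainders whose combination simultaneously eliminates $\psi'(0)$ and exhibits the $\psi'''$ contribution with a manifestly signed kernel. Assumption (A1) is used solely to guarantee that $\psi$ is $C^{\infty}$ on a neighbourhood of $[0,\Phi]$, validating the expansions above and the differentiation-under-the-integral representations in \eqref{eq_psi_der}.
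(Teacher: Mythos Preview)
Your proof is correct and takes a genuinely different route from the paper's. The paper also starts from $\psi'''<0$ but uses it only qualitatively: strict concavity of $\psi'$ gives the secant inequality $\psi''(0)>(\psi'(\Phi)-\psi'(0))/\Phi$, and then a separate computation (expressing $\psi'(0)+\psi'(\Phi)$ as an integral against $\Pi$ with a strictly negative integrand) yields $\psi'(0)+\psi'(\Phi)<0$; combining these two estimates produces the desired sign. Your approach instead packages everything into the single identity
\[
\frac{\psi'(\Phi)}{\Phi}-\frac{\psi''(0)}{2}=\int_0^\Phi \psi'''(u)\,\frac{\Phi^2-u^2}{2\Phi^2}\,\mathrm{d}u,
\]
obtained by eliminating $\psi'(0)/\Phi$ between two Taylor remainders, from which the sign is immediate. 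What your method buys is economy and transparency: no auxiliary inequality about $\psi'(0)+\psi'(\Phi)$ is needed, and the result is an exact formula rather than a chain of strict inequalities. What the paper's method buys is that it avoids the (mild) bookkeeping of integral-remainder Taylor expansions and instead relies on more elementary convexity and a direct integral computation. Both arguments use assumption (A1) only to ensure $\psi$ is smooth on a neighbourhood of $[0,\Phi]$ with finite derivatives there.
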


\begin{proof} If $\Pi(\R_{<0})=0$ then after some straightforward algebra we find that $\Phi=2\gamma/\sigma^2$ and, using \eqref{eq_psi_der}, indeed that \eqref{9Apr2} vanishes. Now suppose that $\Pi(\R_{<0})>0$. From \eqref{eq_psi_der} it follows that $\psi'''<0$ i.e. $\psi'$ is strictly concave, hence
\be{9Apr3} 
\psi''(0)> \frac{\psi'(\Phi)-\psi'(0)}{\Phi}. 
\ee
Further, using \eqref{eq_psi_der} to evaluate $\psi'(0)+\psi'(\Phi)$ and simplifying somewhat using that $\psi(\Phi)=0$ yields
\[ \psi'(0)+\psi'(\Phi)=\int_{(-\infty,0)} \left( x+xe^{\Phi x}-\frac{2}{\Phi} \left( e^{\Phi x}-1 \right) \right) \Pi(\mathrm{d}x). \]
Standard analysis shows that the integrand is strictly negative on $(-\infty,0)$ and hence $\psi'(0)+\psi'(\Phi)<0$. Using this together with \eqref{9Apr3} yields
\[ \frac{\psi'(\Phi)}{\Phi}-\frac{\psi''(0)}{2}<\frac{\psi'(\Phi)}{\Phi}-\frac{\psi'(\Phi)-\psi'(0)}{2\Phi}=\frac{\psi'(\Phi)+\psi'(0)}{2\Phi}<0, \]
and since $\psi'(0)<0$ the result indeed follows.
\end{proof}

\begin{lemma}\label{lem_fa} For any $a \geq 0$, define the function $f_a$ as 
\be{def_f_a} 
f_a(y)=\E \left[ H\left( Y^y_{\tau_a^y} \right) \right] \quad \text{for $y \geq 0$}. 
\ee
Then we have that
\[ f_a(y)=\begin{cases}
H(y)-I_1(a-y) \big( 2\psi'(0)(b-y)+\psi''(0) \big) +2\psi'(0)I_2(a-y) & \\
\quad +2\psi'(\Phi)e^{-\Phi y}I_3(a-y)/\Phi +g(a)W(a-y)/W'(a) & \text{if $y \in [0,a)$}, \\
H(y) & \text{if $y \in [a,\infty)$}
\end{cases} \]
where the function $g:[0,\infty) \to \R$ is given by
\[ g(a)= 2\psi'(0)I_1(a)-2\psi'(\Phi)I_3(a)\\
    +W(a)\left(\psi''(0)-2a\psi'(0)+2b\psi'(0)-\frac{2\psi'(\Phi)}{\Phi}e^{-\Phi a}\right) \]
for all $a \geq 0$.
\end{lemma}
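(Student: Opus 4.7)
The plan is to apply It\^o's formula to $H(Y^y_t)$ stopped at $\tau^y_a$ and then evaluate the resulting expected occupation integral via classical scale-function identities. The case $y \geq a$ is immediate since $\tau^y_a = 0$, so I restrict attention to $y \in [0, a)$. The starting observation is that $H'(y) = 2(y-b)(1 - e^{-\Phi y})$ and hence $H'(0) = 0$. Since $Y^y$ evolves as $-X$ in the interior $(0, \infty)$ and reflects at $0$ via increments of $\overline{X}$, a semimartingale It\^o formula adapted from \eqref{K_decompo} to account for the reflection applied to $H(Y^y_t)$ picks up no contribution from the local time at $0$ thanks to this Neumann-type condition. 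Stopping at $\tau^y_a$ and taking expectations---after a standard localisation using \eqref{eq_cond_varphi} and (A1) to justify integrability of the terminal value---yields
\[ f_a(y) = H(y) + \E\!\left[\int_0^{\tau^y_a} \mathcal{L} H(Y^y_s) \d s\right], \]
where $\mathcal{L}$ denotes the infinitesimal generator of $-X$ acting pointwise on the state of $Y^y$.

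Next I compute $\mathcal{L} H$ explicitly by splitting $H = H_1 + H_2$ as in \eqref{def_quad_H}, with $H_1(y) = (y-b)^2$ and $H_2(y) = (2/\Phi) e^{-\Phi y}(y - b + 1/\Phi)$. A direct calculation using \eqref{eq_psi_der} gives $\mathcal{L} H_1(z) = \psi''(0) + 2\psi'(0)(b - z)$. For $H_2$ the eigen-identity $\mathcal{L} e^{-\lambda z} = \psi(\lambda) e^{-\lambda z}$ combined with $\psi(\Phi) = 0$ yields $\mathcal{L} e^{-\Phi z} = 0$; differentiating this identity in $\lambda$ at $\lambda = \Phi$ then gives $\mathcal{L}(z e^{-\Phi z}) = -\psi'(\Phi) e^{-\Phi z}$. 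Assembling,
\[ \mathcal{L} H(z) = \psi''(0) + 2\psi'(0)(b - z) - \frac{2\psi'(\Phi)}{\Phi}\, e^{-\Phi z}. \]

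The final step is to evaluate $\E[\int_0^{\tau^y_a} \mathcal{L} H(Y^y_s) \d s]$ via the expected occupation measure of $Y^y$ killed on upcrossing $a$, a classical object for spectrally negative L\'evy processes reflected at their supremum that is expressible through the scale function $W$ of $X$ (cf.\ Chapter~8 of \cite{Kyprianou14}). This measure splits naturally into two pieces: one involving $W(z - y)$ and one involving $W(a - y)$ scaled by a factor depending on $W'(a)$. Applied with $g = \mathcal{L} H$, the first piece becomes---via the substitution $u = z - y$ (using that $W$ vanishes on $(-\infty, 0)$) and expansion of $\mathcal{L} H(y + u)$ using the above formula---precisely the combination $-I_1(a-y)[\psi''(0) + 2\psi'(0)(b-y)] + 2\psi'(0) I_2(a-y) + (2\psi'(\Phi)/\Phi) e^{-\Phi y} I_3(a-y)$ appearing in the stated formula. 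The second piece reduces to $g(a) W(a-y)/W'(a)$ for a scale-function integral $g(a)$ which, after integration by parts in $z$ (absorbing boundary values into the $W(a)$-coefficient; note $W(0) = 0$ under unbounded variation), takes the displayed closed form. The main obstacles I anticipate are (i) justifying the It\^o/optional stopping step for the non-bounded payoff $H$ by a careful localisation argument, and (ii) the algebraic manipulation of scale-function integrals needed to match the precise displayed expressions for $f_a$ and $g(a)$.
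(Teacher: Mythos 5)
Your proposal is correct in outline but takes a genuinely different route from the paper. The paper never touches It\^o's formula or the occupation measure here: it writes $H(y)$ as a linear combination of $y^2$, $y$, $ye^{-\Phi y}$, $e^{-\Phi y}$ and a constant, so that $f_a(y)=K''(0)+2bK'(0)-\tfrac{2}{\Phi}K'(\Phi)+\tfrac{2}{\Phi}(\tfrac{1}{\Phi}-b)K(\Phi)+b^2$ with $K(t)=\E[e^{-tY^y_{\tau^y_a}}]$, then reads off $K$ from the known two-sided exit identity for the reflected process (Theorem 8.10 in \cite{Kyprianou14}) and differentiates. That approach sidesteps entirely the issues you flag (localisation, the local-time term at $0$, integrability of the occupation integral), at the cost of opaque algebra. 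Your route --- Dynkin's formula plus the $q=0$ resolvent of $Y^y$ killed at $\tau^y_a$, namely $\E[\int_0^{\tau^y_a}h(Y^y_s)\d s]=\int_{[0,a]}h(z)\bigl(\tfrac{W(a-y)}{W'(a)}W(\d z)-W(z-y)\d z\bigr)$ --- does reproduce the statement: your $\mathcal{L}H(z)=\psi''(0)+2\psi'(0)(b-z)-\tfrac{2\psi'(\Phi)}{\Phi}e^{-\Phi z}$ agrees with the paper's computation of $\mathcal{A}f^y$ in Lemma \ref{lem_Z^y}, the $-W(z-y)\d z$ part gives exactly the three $I_j(a-y)$ terms after substituting $u=z-y$, and Stieltjes integration by parts of $\int_{[0,a]}\mathcal{L}H(z)W(\d z)$ gives $\mathcal{L}H(a)W(a)-\int_0^a(\mathcal{L}H)'(z)W(z)\d z$, which is precisely $g(a)$. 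Your approach has the advantage of making transparent where each term of $f_a$ and of $g$ comes from, and of reusing the generator computation the paper needs anyway later; but to be complete you must (i) justify that the $\d\overline{X}$ term really contributes nothing (it is carried on $\{Y^y_{s}=0\}$ where $H'=0$, including the positive-Lebesgue-measure case of bounded variation), (ii) verify $\E[\tau^y_a]<\infty$ and the uniform integrability needed for optional stopping (the terminal overshoot has exponential moments by (A1)), and (iii) handle the atom of mass $W(0)$ at $z=0$ in the occupation measure in the bounded variation case --- your parenthetical about $W(0)=0$ points at the wrong case, since it is exactly when $W(0)>0$ that this atom matters, though the Stieltjes integral over the closed interval $[0,a]$ absorbs it correctly.
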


\begin{proof} For any $y \geq a$ we have $\tau_a^y=0$ a.s. and hence the result is obvious. Next fix $y \in [0,a)$. Set
\[ K(t)=\E\left[e^{-t Y^y_{\tau_a^y}}\right]. \]
From Theorem 8.10 in \cite{Kyprianou14} we have that
\be{10Apr1} 
K(t)=e^{-ty} \left( Z_t^{(-\psi(t))}(a-y)-W_t^{(-\psi(t))}(a-y)\left(\frac{-\psi(t)W_t^{(-\psi(t))}(a) +tZ_t^{(-\psi(t))}(a)}{{W_t^{(-\psi(t)) \prime}}(a)+tW_t^{(-\psi(t))}(a)} \right) \right) 
\ee
for all $t$ so that $\psi(t)<\infty$, which by (A1) holds on some open interval containing $\R_{\geq 0}$. Here $W_t$ and $Z_t$ are the scale functions associated with $X$ after a change of measure, however using Lemma 8.4 in \cite{Kyprianou14} we can simply write
\begin{multline*} 
W_t^{(-\psi(t))}(x)=e^{-tx}W^{(-\psi(t)+\psi(t))}(x)=e^{-tx}W(x) \quad \text{and} \\ 
Z_t^{(-\psi(t))}(x) = 1-\psi(t) \int_0^x e^{-tz}W(z) \d z. 
\end{multline*}

Plugging the expression for $H$ from \eqref{def_quad_H} into \eqref{def_f_a} we find that we can write
\be{10Apr2} 
f_a(y)=K''(0)+2bK'(0)-\frac{2}{\Phi}K'(\Phi)+\frac{2}{\Phi}\left(\frac{1}{\Phi}-b\right)K(\Phi)+b^2. 
\ee
Now it is a matter of some tedious algebra to work this out using \eqref{10Apr1}. Defining for notational convenience
\[ Z(t,x)=Z_t^{(-\psi(t))}(x), \quad Z_1(t,x)=\frac{\partial Z(t,x)}{\partial t} \quad \text{and} \quad Z_2(t,x)=\frac{\partial^2 Z(t,x)}{\partial t^2} \]
we can compute (also using that $\psi(\Phi)=0$)
\[ K(\Phi)=e^{-\Phi y}-\Phi\frac{W(a-y)}{W'(a)} \]
and
\[ K'(\Phi)=-ye^{-\Phi y}+e^{-\Phi y}Z_1(\Phi,a-y)-\frac{W(a-y)}{W'(a)}\left(-\psi'(\Phi)e^{-\Phi a}W(a)+1+\Phi Z_1(\Phi,a)\right). \]
Further, also using that $\psi(0)=0$,
\[ K'(0)=-y+Z_1(0,a-y)-\frac{W(a-y)}{W'(a)}\left( -\psi'(0)W(a)+1 \right) \]
and
\begin{multline*} 
K''(0)=y^2-2yZ_1(0,a-y)+Z_2(0,a-y)\\
-\frac{W(a-y)}{W'(a)} \left( -\psi''(0)W(a)+2a\psi'(0)W(a)+2Z_1(0,a) \right). 
\end{multline*}
Plugging all these elements back into \eqref{10Apr2} and simplifying a bit yields the result.
\end{proof}

\begin{remark} Theorem 8.10 in \cite{Kyprianou14} uses the stopping time $\sigma_a^y := \inf \{ t >0 \, | \, Y^y_t > a\}$ rather than $\tau_a^y$ to formulate \eqref{10Apr1}. However for $y<a$ these stopping times are a.s. equal. Indeed, the event $\{ \sigma_a^y>\tau_a^y \}$ consists of paths of $Y^y$ that first enter $[a,\infty)$ by hitting $a$ and then take a strictly positive amount of time after that to enter $(a,\infty)$. The former behaviour requires that $X$ has unbounded variation (see Exercise 7.6 in \cite{Kyprianou14}) while the latter requires that $X$ has bounded variation (see Theorem 6.5 in \cite{Kyprianou14}).
\end{remark}

\begin{lemma}\label{lem_g} Consider again the function $g$ defined in Lemma \ref{lem_fa}. 
\begin{enumerate}[label=(\roman*)]
\item If
\[ b \leq \frac{\psi'(\Phi)/\Phi-\psi''(0)/2}{\psi'(0)} \]
then $g>0$ on $(0,\infty)$.
\item If 
\[ b > \frac{\psi'(\Phi)/\Phi-\psi''(0)/2}{\psi'(0)} \]
then $g$ has a unique root in $(0,\infty)$ denoted $a^*$ and we have that $g<0$ resp. $g>0$ on $(0,a^*)$ resp. $(a^*,\infty)$.
\end{enumerate}
\end{lemma}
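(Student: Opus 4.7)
The plan is to study $g'$, which after direct differentiation collapses nicely. Using $I_1'(a)=W(a)$ and $I_3'(a)=e^{-\Phi a}W(a)$ together with the product rule on the $W(a)(\cdot)$ block in the definition of $g$, the contributions from $I_1', I_3'$ exactly cancel the $W(a)$--factor terms produced when differentiating the bracket, leaving
\[ g'(a) = W'(a)\, h(a),\qquad h(a):=\psi''(0)+2\psi'(0)(b-a)-\tfrac{2\psi'(\Phi)}{\Phi}e^{-\Phi a}. \]
Since $W'>0$ on $(0,\infty)$ by (A2), the sign of $g'$ matches the sign of $h$, and the analysis reduces to studying $h$.

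Next I would analyse $h$ directly. Note that $\psi'(\Phi)>0$ by strict convexity of $\psi$ together with $\psi(0)=\psi(\Phi)=0$ and $\psi'(0)<0$, so $h''(a)=-2\Phi\psi'(\Phi)e^{-\Phi a}<0$ and $h'$ is strictly decreasing. However $h'(0)=2(\psi'(\Phi)-\psi'(0))>0$ and $h'(\infty)=-2\psi'(0)>0$, so $h'>0$ throughout, meaning $h$ is strictly increasing from $h(0)$ to $+\infty$. The key identity is
\[ h(0) = 2\psi'(0)\left(b-\frac{\psi'(\Phi)/\Phi-\psi''(0)/2}{\psi'(0)}\right), \]
and since $\psi'(0)<0$ the sign of $h(0)$ is opposite to the sign of $b$ minus the threshold appearing in the statement.

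In case (i), $h(0)\geq 0$, so $h(a)>0$ for $a>0$ by strict monotonicity, hence $g'>0$ on $(0,\infty)$. Since $g(0)=W(0)h(0)\geq 0$ (recalling $W(0)\geq 0$, with equality iff $X$ has unbounded variation), this yields $g>0$ on $(0,\infty)$, as claimed. In case (ii), $h(0)<0$ and $h(\infty)=+\infty$, so $h$ has a unique root $\tilde a\in(0,\infty)$; consequently $g$ is strictly decreasing on $(0,\tilde a)$ and strictly increasing on $(\tilde a,\infty)$, with $g(0)=W(0)h(0)\leq 0$.

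To finish case (ii) I would verify $g(a)\to+\infty$ using the standard asymptotic $W(a)e^{-\Phi a}\to 1/\psi'(\Phi)$ (see Lemma 8.4 in \cite{Kyprianou14}): substituting into the dominant term $2\psi'(0)(b-a)W(a)$ of $g(a)$ yields $g(a)\sim -2a\psi'(0)e^{\Phi a}/\psi'(\Phi)\to+\infty$ since $-\psi'(0)>0$. The intermediate value theorem, applied on $(\tilde a,\infty)$ where $g$ is strictly increasing, then produces a unique root $a^*\in(\tilde a,\infty)$; the sign claim follows from monotonicity together with $g(\tilde a)<g(a^*)=0$ and $g(0)\leq 0$. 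The principal hurdles I anticipate are the bookkeeping in the computation of $g'$ (tedious but clean, thanks to the systematic cancellation) and pinning down $g(\infty)=+\infty$ via the scale-function asymptotic; everything else is monotonicity and IVT.
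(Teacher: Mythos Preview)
Your proof is correct and follows essentially the same route as the paper: both hinge on the identity $g'(a)=W'(a)\,g_2(a)$ (your $h$ is the paper's $g_2$) together with the strict monotonicity of $g_2$ and the evaluation of $g_2(0)$ against the threshold. The only notable tactical difference is in establishing $g(a)\to\infty$: the paper bounds $\int_0^a W(x)\,\mathrm{d}x/W(a)$ via $W(a-x)/W(a)\le e^{-\Phi x}$, whereas you invoke the asymptotic $W(a)e^{-\Phi a}\to 1/\psi'(\Phi)$ to identify the dominant term---both are standard and valid.
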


\begin{proof} Note that we may write $g(a)=g_1(a)+W(a)g_2(a)$ where
\[ g_1(a) := 2\psi'(0)I_1(a)-2\psi'(\Phi)I_3(a) \quad \text{and} \quad
    g_2(a) :=\psi''(0)+2\psi'(0)(b-a)-\frac{2e^{-\Phi a}}{\Phi}\psi'(\Phi). \]
Since $\Phi>0$, $\psi'(0)<0$ and $\psi'(\Phi)>0$ we see that $g_1(a)<0$ for $a>0$ and $g_1(0)=0$. Moreover, $g_2'(a)=-2\psi'(0)+2\psi'(\Phi)e^{-\Phi a}>0$ i.e. $g_2$ is strictly increasing.

If $a^* \in (0,\infty)$ exists so that 
\[ g(a^*)=g_1(a^*)+W(a^*)g_2(a^*)=0 \]
then, using that $g'(a)=W'(a)g_2(a)$ and $g_1<0$
\be{12Apr1} 
g'(a^*)=W'(a^*)g_2(a^*)=W'(a^*) \cdot -\frac{g_1(a^*)}{W(a^*)}>0. 
\ee
It follows that $g$ has at most one root on $(0,\infty)$.

For existence of a root we first show that 
\be{10Apr5}
\lim_{a \to \infty} g(a)=\infty.
\ee
For this, note that
\begin{align*}
    \lim_{a\rightarrow\infty}\frac{g(a)}{W(a)}&=\lim_{a\rightarrow\infty} \frac{g_1(a)}{W(a)}+g_2(a) \\
    &=\lim_{a\rightarrow\infty} 2\psi'(0)\frac{\int_0^aW(x) \d x}{W(a)}-2\psi'(\Phi)\frac{\int_0^ae^{-\Phi x}W(x) \d x}{W(a)}+g_2(a) \\
    &\geq \lim_{a\rightarrow\infty} 2(\psi'(0)-\psi'(\Phi))\int_0^a\frac{W(x)}{W(a)} \d x+g_2(a) \\
        &= \lim_{a\rightarrow\infty} 2(\psi'(0)-\psi'(\Phi))\int_0^a\frac{W(a-x)}{W(a)} \d x+g_2(a) \\
        &= \infty,
\end{align*}
the final equality since $g_2(a) \to \infty$ as $a \to \infty$ (recall that $\psi'(0)<0$) while from Theorems 8.1 \& 3.12 in \cite{Kyprianou14} we can deduce that
\[ \frac{W(a-x)}{W(a)} \leq \P \left( \overline{X}_\infty>x \right) =e^{-\Phi x}. \]
Since $W(a)>0$ for $a>0$ and $W$ is strictly increasing, \eqref{10Apr5} indeed follows.

It remains to look at $g(a)$ for $a$ close to $0$. First consider the case that $X$ has unbounded variation. Then $g(0)=0$ since $W(0)=0$ and $g_1(0)=0$. Further, since $g'(0+)=W'(0)g_2(0)$ and $W'(0)>0$ it follows that $g$ has a unique root on $(0,\infty)$ if and only if $g_2(0)<0$ i.e. if and only if
\be{10Apr7}
b > \frac{\psi'(\Phi)/\Phi-\psi''(0)/2}{\psi'(0)}
\ee
(indeed, if $g_2(0)=0$ then $g_2>0$ on $(0,\infty)$ since $g_2$ is strictly increasing). If $X$ has bounded variation then $g(0)=W(0)g_2(0)$ where $W(0)>0$ and hence we can again conclude that $g$ has a unique root on $(0,\infty)$ if and only if $g_2(0)<0$ i.e. if and only if \eqref{10Apr7} holds.
\end{proof}

Now we are ready to start working towards the proof of Theorem \ref{thm_main}. Note that it is obvious that $V$ as defined in \eqref{def_V} is bounded below by $H(b)$ (since $H$ is) and that $V \leq H$ (by plugging $\tau=0$ into the right hand side of \eqref{def_V}).

\begin{lemma}\label{lem_CS}
Define the following regions in the state space $\R_{\geq 0}$ of $Y^y$:
\[ \mathcal{C} = \{ y\in \R_{\geq 0} \, | \, V(y)<H(y) \} \quad \text{and} \quad \mathcal{S} = \{ y\in \R_{\geq 0} \, | \, V(y)=H(y) \}=\mathcal{C}^c. \]
Then for any $y\geq0$ the stopping time
\[ \tau^*:=\inf\{t\geq0 \, | \,  Y_t^y\in \mathcal{S} \} \]
is optimal in \eqref{def_V}, i.e. $\mathcal{C}$ and $\mathcal{S}$ are the continuation and stopping region respectively. Further $V$ is continuous.
\end{lemma}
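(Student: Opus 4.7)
My plan is to treat the lemma as a standard optimal stopping result for the strong Markov process $Y^y$ with continuous payoff $H$, the only nonstandard feature being that $H(Y^y_t) \to \infty$ a.s.\ so the usual integrability conditions do not hold. The argument proceeds in three steps: (1) establish continuity of $V$ via a pathwise coupling, (2) apply classical optimal stopping theory on the truncated state space $[0,N]$, and (3) pass to the limit $N \to \infty$.

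For continuity of $V$, the main tool is the pathwise coupling afforded by \eqref{eq_def_Y}: for $0 \leq y_1 \leq y_2$ realised on the same probability space, $0 \leq Y^{y_2}_t - Y^{y_1}_t \leq y_2 - y_1$ for every $t$, and the difference vanishes once $\overline{X}$ has exceeded $y_2$ (which happens a.s.\ at some finite time since $\overline{X}_\infty=\infty$ under \eqref{eq_ult_sup}... no, rather since $Y^y \to \infty$). Given $\eps>0$, pick $\tau$ that is $\eps$-optimal for $V(y_1)$; the bound $V(y_1) \leq H(y_1) < \infty$ together with the quadratic term in $H$ controls $\E[Y^{y_1}_\tau]$, and the explicit derivative $H'(y) = 2(y-b)(1-e^{-\Phi y})$ (which is at most linear in $y$) then allows one to estimate $\E[H(Y^{y_2}_\tau)] - \E[H(Y^{y_1}_\tau)] \leq C(y_1)(y_2-y_1)$. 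The symmetric bound in the other direction yields local Lipschitz continuity, hence continuity, of $V$.

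For the optimality of $\tau^*$, fix $y$ and for $N > y$ set $\tau^y_N := \inf\{t \geq 0 : Y^y_t \geq N\}$, which is a.s.\ finite since $Y^y_t \to \infty$. The strong Markov property applied at $\tau^y_N$ together with the definition of $V$ gives
\[ V(y) = \inf_\tau \E\bigl[ H(Y^y_\tau) \ind{\tau \leq \tau^y_N} + V(Y^y_{\tau^y_N}) \ind{\tau > \tau^y_N} \bigr], \]
a bounded-payoff optimal stopping problem on the compact state space $[0,N]$. Classical optimal stopping theory then yields that the entry time $\tau^*_N$ into $\mathcal{S} \cap [0,N]$, which is closed by the continuity of $V$ established in Step 1, is optimal for this truncated problem. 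Letting $N \to \infty$, $\tau^*_N \nearrow \tau^*$ a.s., and the corresponding values converge to $V(y)$ by Fatou, using $V \leq H$ for domination; hence $\tau^*$ is optimal for the original problem.

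The principal obstacle is making the limit $N \to \infty$ rigorous, specifically showing $\E[H(Y^y_{\tau^* \wedge \tau^y_N})] \to \E[H(Y^y_{\tau^*})]$. This should follow by observing that on $\{\tau^* > \tau^y_N\}$ one has $H(Y^y_{\tau^y_N}) \geq H(N)$ which grows like $N^2$, so the trivial bound $V(y) \leq H(y) < \infty$ forces $\P(\tau^* > \tau^y_N) = O(H(y)/N^2)$, making the tail contribution negligible as $N \to \infty$. The rest of the argument is fairly routine, but care is needed in assembling these pieces so that the submartingale/supermartingale manipulations behind the classical result are only ever carried out up to the bounded time $\tau^y_N$.
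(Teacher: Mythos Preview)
Your continuity argument is essentially the same as the paper's: both take an $\eps$-optimal stopping time $\tau_\eps$ for $V(y_0)$, use the pathwise bound $|Y^{y_k}_t - Y^{y_0}_t| \leq |y_k-y_0|$, observe that $|H'|$ has at most linear growth, and control $\E[Y^{y_0}_{\tau_\eps}]$ via the quadratic part of $H$ and the finiteness of $V(y_0)$. (Your aside that $\overline{X}$ will exceed $y_2$ a.s.\ is wrong --- under \eqref{eq_ult_sup} we have $\overline{X}_\infty \sim \mathrm{Exp}(\Phi)$, not $\overline{X}_\infty=\infty$ --- but you do not actually use this.)

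For the optimality of $\tau^*$, however, the paper takes a much shorter route: since $H \geq 0$, it simply invokes Theorem~6 of Shiryaev's \emph{Optimal Stopping Rules}, which gives optimality of the first entry time into $\{V=H\}$ for non-negative payoffs without any integrability assumption on $H(Y^y)$. Your truncation-and-limit scheme is a legitimate alternative in spirit, but as written it has real gaps. First, the truncated problem is \emph{not} bounded-payoff on the compact set $[0,N]$: because $X$ has negative jumps, $Y^y_{\tau^y_N}$ overshoots $N$ by a random (and possibly unbounded) amount, so the terminal reward $V(Y^y_{\tau^y_N})$ is neither bounded nor a function on $[0,N]$. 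You would need the integrability of $H(Y^y_{\tau^y_N})$ (which the paper establishes separately in Lemma~\ref{lem_fa}) before classical results apply. Second, your tail estimate $\P(\tau^*>\tau^y_N)=O(N^{-2})$ does not follow from the trivial bound $V(y)\leq H(y)$ as you claim: the truncated identity involves $V(Y^y_{\tau^y_N})$, not $H(Y^y_{\tau^y_N})$, on $\{\tau^*>\tau^y_N\}$, and at this stage you have no lower bound on $V(z)$ for large $z$ beyond $H(b)$. Without such a bound you cannot rule out $\P(\tau^*=\infty)>0$, which would make $\E[H(Y^y_{\tau^*})]=\infty$ and break the limit. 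These issues are repairable (e.g.\ one can first show $[b,\infty)\subset\mathcal{S}$ directly, forcing $\tau^*\leq \tau^y_b<\infty$ a.s.), but the cleanest fix is simply to cite Shiryaev as the paper does.
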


\begin{proof} For continuity of $V$, fix some $y_0 \geq 0$. Let $\{y_k\}_{k \geq 1}$ be a sequence so that $y_k \downarrow y_0$ as $k \to \infty$. Fix any $\eps>0$. By definition of the infimum in \eqref{def_V} there exists a stopping time $\tau_\eps$ so that 
\be{3jan1} 
V(y_0) \geq \E[H(Y^{y_0}_{\tau_\eps})] - \eps. 
\ee
Note that since $V(y_0) \leq H(y_0)<\infty$, this implies that 
\[ \E[H(Y^{y_0}_{\tau_\eps})]<\infty, \]
and since $H(\infty)=\infty$ and $Y^{y_0}_\infty=\infty$ it also follows that $\tau_\eps<\infty$ a.s. Further, it is clear from the expression for $H$ that $C>0$ exists so that $H(y) \geq Cy$ for all $y \geq 0$ and hence it also follows that
\be{3jan2} 
\E[Y^{y_0}_{\tau_\eps}] \leq \frac{1}{C} \E[H(Y^{y_0}_{\tau_\eps})] <\infty. 
\ee
Now, for any $k \geq 1$ we trivially have that
\be{3jan3} 
V(y_k) \leq \E[H(Y^{y_k}_{\tau_\eps})]. 
\ee
Noting that $|H'|$ can be bounded above by $y \mapsto Ay+B$ for some $A,B>0$ and that $|Y^{y_k}_{\tau_\eps}-Y^{y_0}_{\tau_\eps}| \leq y_k-y_0$, an application of Taylor's Theorem shows that
\begin{align*} 
\E \left[ \big| H(Y^{y_k}_{\tau_\eps})-H(Y^{y_0}_{\tau_\eps}) \big| \right] &\leq (A \E[Y^{y_k}_{\tau_\eps}]+B) (y_k-y_0) \\
&\leq  \big( A (\E[Y^{y_0}_{\tau_\eps}]+(y_k-y_0))+B \big) (y_k-y_0). 
\end{align*}
Letting $k \to \infty$, since $y_k \downarrow y_0$ and recalling \eqref{3jan2} we see that the ultimate rhs above vanishes and hence we find that
\[ \E[H(Y^{y_k}_{\tau_\eps})] \longrightarrow \E[H(Y^{y_0}_{\tau_\eps})]. \]
We can now conclude that
\[ \limsup_{k \to \infty} V(y_k) \stackrel{\eqref{3jan3}}{\leq} \limsup_{k \to \infty} \E[H(Y^{y_k}_{\tau_\eps})] = \E[H(Y^{y_0}_{\tau_\eps})] \stackrel{\eqref{3jan1}}{\leq} V(y_0)+\eps. \]
Since $\eps>0$ was arbitrary, it in fact follows that
\[ \limsup_{k \to \infty} V(y_k) \leq V(y_0). \]

Finally, in a similar way it can be shown that 
\[ \liminf_{k \to \infty} V(y_k) \geq V(y_0) \]
and hence right continuity of $V$ in $y_0$ follows. Left continuity (provided that $y_0>0$) can be shown using analogue arguments.

It follows from Theorem 6 in \cite{Shiryaev08} (recall that $H$ is non-negative) that 
\[ \inf\{t\geq0 \, | \, V(Y_t^y)=H(Y_t^y)\} \] 
is an optimal stopping time for \eqref{def_V}. 
\end{proof}

\begin{remark}\label{rem_XeqY} Note that for any $y \geq 0$, since $X$ is spectrally negative its running supremum $\overline{X}$ is a continuous process. In particular, on the time interval $[0,\kappa]$ where
\[ \kappa = \inf \{ t \geq 0 \, | \, X_t \geq y \} = \inf \{ t \geq 0 \, | \, \overline{X}_t = y \} = \inf \{ t \geq 0 \, | \, Y^y_t =0 \} \]
it holds that $Y^y_t=y-X_t$ and the filtrations generated by $Y^y$ and $X$ coincide.
\end{remark}

\begin{lemma}\label{lem_Z^y} Fix any $y \geq 0$. Let $Z^y=(Z^y_t)_{t \geq 0}$ be the process given by $Z^y_t = H(y-X_t)$ for all $t \geq 0$. Further define the stopping times
\[ \kappa_y^{\widehat{y}} = \inf \{ t \geq 0 \, | \, X_t \geq y-\widehat{y} \} \quad \text{for $\widehat{y} \geq 0$}. \] 
We have the following.
\begin{enumerate}
\item[(i)] Suppose that 
\[ b \leq \frac{\psi'(\Phi)/\Phi-\psi''(0)/2}{\psi'(0)}. \]
Then the stopped process $\left(Z^y_{t \wedge \kappa_y^0}\right)_{t \geq 0}$ is a local submartingale.
\item[(ii)] Next suppose that 
\[ b > \frac{\psi'(\Phi)/\Phi-\psi''(0)/2}{\psi'(0)}. \]
Then there exists $y^* \in (0,a^* \wedge b)$ (independent of $y$) such that the stopped process $\left(Z^y_{t \wedge \kappa_y^{ y^*}}\right)_{t \geq 0}$ is a local submartingale.
\end{enumerate}
(Recall Lemma \ref{lem_bound} and that $a^*$ was defined in Lemma \ref{lem_g}).
\end{lemma}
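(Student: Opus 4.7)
The plan is to apply the It\^o decomposition \eqref{K_decompo} to $f(z):=H(y-z)$ evaluated along $X$. By Remark \ref{rem_XeqY}, on $[0,\kappa_y^0]$ one has $Y^y_t=y-X_t$, so $Z^y_t=H(Y^y_t)=f(X_t)$. A change of variables in $\mathcal{A}f$ then yields, on that interval,
\[ Z^y_t=H(y)+M_t+\int_0^t \mathcal{L}H(Y^y_{s^-}) \d s, \]
where $\mathcal{L}$ denotes the infinitesimal generator of the dual process $-X$ (equivalently, of $Y^y$ on the interior $(0,\infty)$). The integrability conditions ensuring that $M$ is a local martingale follow from (A1), which guarantees $\psi''(0)<\infty$ and indeed all integer moments of $X_1$, controlling the quadratic-plus-exponential growth of $H$ and its derivatives. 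A standard localisation at $\sigma_n:=\inf\{t:Y^y_t>n\}$ handles any remaining growth concerns, with $n\to\infty$ at the end.

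The central calculation is to identify $\mathcal{L}H$ explicitly. Using the fundamental identity $\mathcal{L}e^{zu}=\psi(-z)e^{zu}$ (valid on the interior of the domain of $\psi$, in which $-\Phi$ lies by (A1)), I read off $\mathcal{L}e^{-\Phi u}=0$ (since $\psi(\Phi)=0$), $\mathcal{L}u=-\psi'(0)$, $\mathcal{L}(u^2)=\psi''(0)-2u\psi'(0)$, and $\mathcal{L}(ue^{-\Phi u})=-\psi'(\Phi)e^{-\Phi u}$ (the last by differentiating the fundamental identity in $z$ at $z=-\Phi$). Plugging the explicit form \eqref{def_quad_H} of $H$ into these identities and collecting yields
\[ \mathcal{L}H(u)=\psi''(0)+2\psi'(0)(b-u)-\frac{2\psi'(\Phi)}{\Phi}e^{-\Phi u}, \]
which is precisely the function $g_2$ from the proof of Lemma \ref{lem_g}. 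Since $g_2'(u)=-2\psi'(0)+2\psi'(\Phi)e^{-\Phi u}>0$, $g_2$ is continuous and strictly increasing on $[0,\infty)$.

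For case (i), the assumed bound on $b$ is (after rearrangement, noting that $\psi'(0)<0$ flips the inequality) exactly $g_2(0)\geq 0$, so $\mathcal{L}H(u)\geq 0$ on all of $[0,\infty)$ and the local submartingale property follows. For case (ii) one instead has $g_2(0)<0$; by continuity, strict monotonicity and $g_2(a)\to\infty$ (all established in Lemma \ref{lem_g}) there is a unique $y^*>0$ with $g_2(y^*)=0$, and $g_2>0$ on $(y^*,\infty)$. It remains to check $y^*<a^*\wedge b$. The bound $y^*<a^*$ drops out of Lemma \ref{lem_g}: at the root $a^*$ of $g=g_1+Wg_2$ one has $g_1(a^*)<0$ and $W(a^*)>0$, forcing $g_2(a^*)>0$ and hence $a^*>y^*$. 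For $y^*<b$, I compute $\mathcal{L}H(b)=g_2(b)$ directly using $H'(b)=0$ (immediate from $H'(u)=2(u-b)(1-e^{-\Phi u})$): it equals $\sigma^2(1-e^{-\Phi b})+\int_{(-\infty,0)}(H(b-x)-H(b)) \Pi(\d x)$, which is strictly positive since $H$ is strictly increasing on $[b,\infty)$ and at least one of $\sigma>0$ or $\Pi((-\infty,0))>0$ holds (otherwise $X$ would be a pure drift with $\Phi=0$, contradicting our standing assumption). Thus $\mathcal{L}H(Y^y_{s^-})\geq 0$ on $[0,\kappa_y^{y^*}]$, giving the local submartingale property.

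The hard part will be recognising $\mathcal{L}H\equiv g_2$, which is what bridges the It\^o decomposition to the auxiliary function already studied in Lemma \ref{lem_g}; once that identification is in place, the sign analysis in both cases reduces to strict monotonicity of $g_2$. The only mildly delicate remaining item is the bound $y^*<b$ in case (ii), handled by the direct computation of $\mathcal{L}H(b)$ exploiting $H'(b)=0$.
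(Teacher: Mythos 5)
Your proof is correct and follows essentially the same route as the paper: an It\^o/generator decomposition of $H(y-X_t)$, identification of the drift with the strictly increasing function $g_2$ from the proof of Lemma \ref{lem_g}, and a sign analysis of $g_2$ in the two cases (including the deduction $y^*<a^*$ from $g_2(a^*)>0$ at the root of $g$). The only, equally valid, variations are that you compute $\mathcal{L}H=g_2$ by differentiating the identity $\mathcal{L}e^{zu}=\psi(-z)e^{zu}$ in $z$ rather than by direct substitution, and that you obtain $y^*<b$ from strict positivity of the generator at $b$ (using $H'(b)=0$ and monotonicity of $H$ on $[b,\infty)$) rather than from Lemma \ref{lem_bound} as the paper does.
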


\begin{proof} Note that $H$ can be extended in a smooth way beyond $0$ since $H''(0+)=-2b\Phi$. Let for notational convenience the function $f^y$ be given by $f^y(x)=H(y-x)$. Applying the operator defined in \eqref{Afx} we have
\begin{multline}\label{Kevin30} 
\mathcal{A}f^y(l) = \frac{\sigma^2}{2} H''(y-l)+\gamma H'(y-l) \\
+ \int_{(-\infty,0)} \big( H(y-l-x)-H(y-l)+x H'(y-l) \mathbf{1}_{\{ x>-1 \}} \big) \Pi(\mathrm{d}x)
\end{multline} for $l\leq y$.
Using Taylor's Theorem and the fact that $H''$ is bounded it is easy to see that for some $C_1>0$
\[ \int_{(-1,0)} \big| H(y-l-x)-H(y-l)+x H'(y-l) \big| \Pi(\mathrm{d}x) \leq C_1 \int_{(-1,0)} x^2 \Pi(\mathrm{d}x). \]
Again Taylor's Theorem together with $|H'(x)| \leq C_2 x$ for some $C_2>0$ gives that
\begin{multline*} 
\int_{(-\infty,-1]} \big| H(y-l-x)-H(y-l) \big| \Pi(\mathrm{d}x) \\
\leq C_2 \int_{(-\infty,-1]} x^2 \Pi(\mathrm{d}x) +C_2 |y-l| \int_{(-\infty,-1]} |x| \Pi(\mathrm{d}x),
\end{multline*}
where both integrals in the right hand side are finite on account of assumption (A1). Using the above two inequalities together with Fubini's Theorem it follows that 
\begin{align*}\E\left[\int_{[0,t]}\int_{(-\infty,0)}\left(H(y-X_{s^-}-x)-H(y-X_{s^-})+x\mathbf{1}_{|x|<1}H'(y-X_{s^-})\right)\Pi(dx)ds\right]<\infty \end{align*}
and hence we may write (cf. \eqref{K_decompo})
\be{Kevin31}
f^y(X_t) = f^y(0) + M^y_t + \int_0^t \mathcal{A}f^y(X_{s^-}) \d s \quad \text{for all $t \geq 0$},
\ee
where $M^y$ is a local martingale.

It remains to investigate the sign of $\mathcal{A}f^y$. Plugging in the expression for $H$ from \eqref{def_quad_H} yields
\begin{align*}
\mathcal{A}f^y(l) &= 2\gamma(y-l-b) \left( 1-e^{-\Phi(y-l)} \right)+\sigma^2 \left( 1-e^{-\Phi(y-l)}+(y-l-b)\Phi e^{-\Phi(y-l)} \right) \\
 & \qquad +\int_{(-\infty,0)} \bigg( -2(y-l-b)x+x^2-\frac{2}{\Phi}e^{-\Phi(y-l)}xe^{\Phi x} \\
 & \qquad \qquad \qquad \qquad + 2x\mathbf{1}_{\{ x>-1 \}} \left( y-l-b+\frac{e^{-\Phi(y-l)}}{\Phi}\right) \bigg) \Pi(dx) \\
 & \qquad +\frac{2}{\Phi}e^{-\Phi(y-l)}\left(y-l-b+\frac{1}{\Phi}\right)\int_{(-\infty,0)} \left( e^{\Phi x}-1-x\Phi \mathbf{1}_{\{ x>-1 \}} \right)\Pi(dx).
\end{align*}
Using that $\psi(\Phi)=0$ this simplifies to
\begin{align*}
\mathcal{A}f^y(l) &= \sigma^2+\int_{(-\infty,0)} x^2\Pi(dx)
-\frac{2}{\Phi}e^{-\Phi(y-l)}\left( -\gamma +\Phi\sigma^2 +\int_{(-\infty,0)} \left( xe^{\Phi x}-x \mathbf{1}_{\{ x>-1 \}} \right)\Pi(dx) \right) \\
& \qquad -2(y-l-b)\left( -\gamma+\int_{(-\infty,-1]} x\Pi(dx) \right)
\end{align*}
and further making use of \eqref{eq_psi_der} we can arrive at
\[ \mathcal{A}f^y(l)=\psi''(0)-\frac{2}{\Phi}e^{-\Phi(y-l)}\psi'(\Phi)-2(y-l-b)\psi'(0).\]
Also note that since $\psi'(0)<0$ and $\psi'(\Phi)>0$
\[ \frac{\mathrm{d}}{\mathrm{d}l}\mathcal{A}f^y(l)=2\psi'(0)-\frac{2}{\Phi}\psi'(\Phi)e^{-\Phi(y-l)}< 0 \quad \text{for any $l\leq y$.} \]

Now first consider case (i). In this case plugging in $l=y$ yields
\[ \mathcal{A}f^y(y) = \psi''(0)-\frac{2}{\Phi}\psi'(\Phi)+2b\psi'(0) \geq 0 \]
and hence $\mathcal{A}f^y(l) \geq 0$ for all $l \leq y$. Since by definition of $\kappa_y^0$ the process $(X_{t \wedge \kappa_y^0})_{t \geq 0}$ is bounded above by $y$, the drift term in \eqref{Kevin31} is non-decreasing on $[0,\kappa_y^0]$ and the result follows.

Next consider case (ii). Recalling Lemma \ref{lem_bound} we now have that
\[ \mathcal{A}f^y(y-b)=\psi''(0)-\frac{2e^{-\Phi b}}{\Phi}\psi'(\Phi) > \psi''(0)-\frac{2}{\Phi}\psi'(\Phi) \geq 0. \]
Further $\mathcal{A}f^y(y)<0$. Hence $y^* \in (0,b)$ exists so that $\mathcal{A}f^y(y-y^*)=0$ i.e.
\[ \psi''(0)-\frac{2}{\Phi}e^{-\Phi y^*}\psi'(\Phi)-2(y^*-b)\psi'(0)=0 \]
which confirms that $y^*$ is independent of $y$. Since now $\mathcal{A}f^y(l) \geq 0$ for all $l \leq y-y^*$ the result follows analogue to above, it only remains to show that $y^*<a^*$. The latter follows by observing that $\mathcal{A}f^y(y-a^*)$ is identical to $g_2(a^*)$, where $g_2$ was defined in the proof of Lemma \ref{lem_g}, and $g_2(a^*)>0$ (cf. \eqref{12Apr1}).
\end{proof}

The next four lemmas are dedicated to fully fleshing out the case that
\be{eq_case1}
b > \frac{\psi'(\Phi)/\Phi-\psi''(0)/2}{\psi'(0)}.
\ee

\begin{lemma}\label{lem_stop1} Recall the stopping region $\mathcal{S}$ as defined in Lemma \ref{lem_CS} and $y^*$ as defined in Lemma \ref{lem_Z^y}. Under \eqref{eq_case1} there exists $a_0 \in [y^*,b]$ so that $\mathcal{S} \cap [y^*,\infty)=[a_0,\infty)$. 
\end{lemma}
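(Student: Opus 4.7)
The plan is to decompose the claim into three assertions: (a) $b\in\mathcal{S}$; (b) $\mathcal{S}\cap[y^*,\infty)$ is closed upwards (an up-set); and (c) $\mathcal{S}$ is closed. Setting $a_0:=\inf(\mathcal{S}\cap[y^*,\infty))$, these three together deliver $\mathcal{S}\cap[y^*,\infty)=[a_0,\infty)$ with $a_0\in[y^*,b]$ (noting $y^*<b$ from Lemma \ref{lem_Z^y}).

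Step (a) is immediate: differentiating \eqref{def_quad_H} one obtains $H'(y)=2(y-b)(1-e^{-\Phi y})$, so $H$ strictly decreases on $(0,b)$ and strictly increases on $(b,\infty)$, attaining its global minimum at $y=b$. Hence $V(y)\geq H(b)$ for every $y$, while $V(b)\leq H(b)$ trivially, forcing $V(b)=H(b)$. Step (c) follows directly from the continuity of $V$ and $H$ established in Lemma \ref{lem_CS}.

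Step (b) is where the work is. Fix $y_1<y_2$ in $[y^*,\infty)$ with $y_1\in\mathcal{S}$, and an arbitrary stopping time $\tau$ for $Y^{y_2}$. Set $\kappa:=\kappa_{y_2}^{y_1}=\inf\{t\geq 0:X_t\geq y_2-y_1\}$; since $y_1\geq y^*$ we have $\kappa\leq\kappa_{y_2}^{y^*}$, so by Lemma \ref{lem_Z^y}(ii) the stopped process $(H(Y^{y_2}_{t\wedge\kappa}))_{t\geq 0}$ is a local submartingale. A suitably localized optional stopping argument yields $\E[H(Y^{y_2}_{\tau\wedge\kappa})]\geq H(y_2)$. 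On $\{\kappa<\infty\}$, spectral negativity of $X$ forces $Y^{y_2}_\kappa=y_1$ exactly, and the strong Markov property at $\kappa$ shows that the post-$\kappa$ trajectory of $Y^{y_2}$ is equal in law to $Y^{y_1}$ started from $y_1$. Combined with $y_1\in\mathcal{S}$ this gives $\E[H(Y^{y_2}_\tau)\mid\mathcal{F}_\kappa]\geq V(y_1)=H(y_1)$ on $\{\tau\geq\kappa\}$. Decomposing $\E[H(Y^{y_2}_\tau)]$ as the sum over $\{\tau<\kappa\}$ and $\{\tau\geq\kappa\}$, and using the identity $\E[H(Y^{y_2}_{\tau\wedge\kappa})]=\E[H(Y^{y_2}_\tau);\tau<\kappa]+H(y_1)\P(\tau\geq\kappa)$, the $H(y_1)\P(\tau\geq\kappa)$ contributions cancel and one obtains $\E[H(Y^{y_2}_\tau)]\geq H(y_2)$. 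Taking the infimum over $\tau$ gives $V(y_2)\geq H(y_2)$, so with $V\leq H$ trivial, $y_2\in\mathcal{S}$.

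The main technical obstacle lies in the optional stopping step, specifically passing the submartingale inequality to the (possibly infinite) time $\tau\wedge\kappa$. On $\{\kappa=\infty\}$ the process $Y^{y_2}$ drifts to infinity and $H(Y^{y_2})$ is unbounded, so one needs careful localization (e.g.\ via $T_n:=\inf\{t\geq 0:Y^{y_2}_t>n\}$) combined with integrability bounds drawn from the polynomial growth of $H$ and the moment controls available under assumption (A1), in the spirit of the estimates used in the proof of Lemma \ref{lem_Z^y}. Once this is handled, the rest of the argument is essentially bookkeeping.
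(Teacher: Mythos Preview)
Your proposal is correct and follows essentially the same approach as the paper: $b\in\mathcal{S}$ and closedness of $\mathcal{S}$ are handled exactly as you do, and the up-set property on $[y^*,\infty)$ is obtained via the local submartingale of Lemma~\ref{lem_Z^y} combined with the fact that $Y^{y_2}_\kappa=y_1\in\mathcal{S}$ so that $V=H$ there. The paper's execution differs only in two minor ways: it works with the specific optimal stopping time $\tau^*$ from Lemma~\ref{lem_CS} (yielding the exact identity $V(y_2)=\E[H(Y^{y_2}_{\tau^*\wedge\kappa})]$ and guaranteeing $\tau^*\wedge\kappa<\infty$ a.s., which sidesteps the $\{\tau=\kappa=\infty\}$ caveat in your identity), and it makes your localization concrete by bounding everything by $H(Y^{y_2}_{\tau_a^{y_2}})$, whose integrability is furnished directly by Lemma~\ref{lem_fa}.
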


\begin{proof} First note that since $H$ attains its global minimum in $b$ it immediately follows that $V(b)=H(b)$ i.e. $b \in \mathcal{S} \cap [y^*,\infty)$. Further since $V$ is continuous (cf. Lemma \ref{lem_CS}), $\mathcal{S}=(V-H)^{-1}(\{0\})$ is a closed set. It hence suffices to show that if $\widehat{y} \in \mathcal{S} \cap [y^*,\infty)$ and $y \geq \widehat{y}$, then $y \in \mathcal{S}$. 

For this, using the same notation as in Lemma \ref{lem_Z^y} and recalling Remark \ref{rem_XeqY}, note that
\[ H \left( Y^y_{t\wedge\kappa_y^{\hat y}} \right) = H \left( y-X_{t\wedge\kappa_y^{\widehat{y}}}\right) = Z^y_{t\wedge\kappa_y^{\widehat{y}}} \quad \text{for all $t \geq 0$} \]
and since $\widehat{y} \in \mathcal{S}$
\[ V \left( Y^y_{\kappa_y^{\widehat{y}}} \right) = V(\widehat{y})=H(\widehat{y})=H \left( Y^y_{\kappa_y^{\widehat{y}}} \right) \quad \text{ on $\{\kappa_y^{\widehat{y}}<\infty\}$}. \]
Using $\tau^*$ as defined in Lemma \ref{lem_CS} we can write
\begin{align} 
V(y) = \E \left[ H \left( Y^y_{\tau^*} \right) \right] &= \E\left[ \mathbf{1}_{\{ \tau^*\leq{\kappa_y^{\hat y}} \}} H\left(Y^y_{\tau^*}\right) + \mathbf{1}_{\{ \tau^* > {\kappa_y^{\widehat{y}}}\}} V\left(Y^y_{\kappa_y^{\widehat{y}}}\right)\right] \nonumber\\
&=  \E\left[ \mathbf{1}_{\{ \tau^*\leq {\kappa_y^{\widehat{y}}} \}} H\left(Y^y_{\tau^*}\right) + \mathbf{1}_{\{ \tau^* > {\kappa_y^{\widehat{y}}} \}} H\left(Y^y_{\kappa_y^{\widehat{y}}}\right)\right] \nonumber\\
&=  \E\left[H\left(Y^y_{\tau^* \wedge {\kappa_y^{\widehat{y}}}}\right)\right] = \E\left[ Z^y_{\tau^* \wedge \kappa_y^{\widehat{y}}} \right]. \label{13Apr2}
\end{align}
Denote by $\{T_n\}_{n\in\N}$ the localising sequence of stopping times for the local submartingale from Lemma \ref{lem_Z^y}. Since $\widehat{y} \geq y^*$, it follows from the Optional Sampling Theorem that for any $t>0$, $n\in\N$ and $a>y$
\be{13Apr1} 
\E\left[H\left(Y^y_{\tau \wedge \kappa_y^{\widehat{y}} \wedge T_n\wedge t \wedge \tau_a^y} \right)\right] \geq \E\left[H\left(Y^y_0 \right)\right] = H(y). 
\ee
Clearly for all $a$ large enough, $H$ attains its maximum on $[0,a]$ in $a$, and $\tau_a^y<\infty$ a.s. So the rv in the left hand side of \eqref{13Apr1} is bounded above by $H(Y^y_{\tau_a^y})$ which is integrable on account of Lemma \ref{lem_fa}, and hence by dominated convergence \eqref{13Apr1} remains valid for $t=\infty$ and $n=\infty$. Next we can let $a \to \infty$ and use monotone convergence to arrive at
\[ \E\left[H\left(Y^y_{\tau \wedge \kappa_y^{\widehat{y}}} \right)\right] \geq H(y), \]
so that it follows from \eqref{13Apr2} that $V(y) \geq H(y)$ i.e. $y \in \mathcal{S}$.
\end{proof}

\begin{lemma}\label{lem_stop2} Under \eqref{eq_case1}, $0 \not\in \mathcal{S}$.
\end{lemma}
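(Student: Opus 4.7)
The plan is to prove $V(0) < H(0)$, from which $0 \notin \mathcal{S}$ follows by definition. I will establish this by exhibiting a stopping time $\tau$ with $\E[H(Y^0_\tau)] < H(0)$, namely $\tau = \tau^0_a$ for a suitably small $a > 0$. Writing $f_a(0) = \E[H(Y^0_{\tau^0_a})]$ via Lemma~\ref{lem_fa} and setting $J(a) := f_a(0) - H(0)$, the task reduces to showing $J(a) < 0$ for some $a > 0$.

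The central computation is the clean identity
\[ J'(a) = g(a)\,\bigl[W(a)/W'(a)\bigr]' \]
for $a > 0$. I would obtain this by differentiating the expression for $J$ term-by-term (using $I_1' = W$, $I_2'(a) = aW(a)$, $I_3'(a) = e^{-\Phi a}W(a)$ and the product rule on the $g\cdot W/W'$ term), substituting $g'(a) = W'(a)g_2(a)$ from the proof of Lemma~\ref{lem_g}, and noting that all the terms linear in $W(a)$ cancel once the explicit form of $g_2$ is inserted. I would also observe that \eqref{eq_case1} is equivalent (upon rearrangement, using $\psi'(0) < 0$) to the inequality $g_2(0) = \psi''(0) + 2b\psi'(0) - 2\psi'(\Phi)/\Phi < 0$.

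Next, I split by path variation of $X$. In the bounded-variation case, $W(0) > 0$ and direct limit-taking in the expression for $J$ gives $J(0+) = W(0)^2 g_2(0)/W'(0) < 0$, so continuity of $J$ yields $J(a) < 0$ for all sufficiently small $a > 0$. In the unbounded-variation case, $W(0) = 0$ so $J(0+) = 0$ and the derivative identity becomes essential: for small $a > 0$ one has $g(a) < 0$ (since $g(0) = 0$ and $g'(0+) = W'(0+) g_2(0) < 0$, with $W'(0+) \in (0, \infty]$), while $[W(a)/W'(a)]' > 0$ near $0$ (as $W(0) = 0$ yields $W/W' \sim a/\beta$ for some $\beta \in (0, 1]$ governed by the local asymptotics of $W$); hence $J'(a) < 0$ near $0$, which together with $J(0+) = 0$ forces $J(a) < 0$ for all small $a > 0$. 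In either case we conclude $V(0) \leq f_a(0) = H(0) + J(a) < H(0)$.

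The main obstacle I expect is the unbounded-variation case, specifically establishing the strict positivity of $[W(a)/W'(a)]'$ near $a = 0$ when $W'(0+) = +\infty$; this requires careful appeal to the local (regular-variation) behaviour of the scale function $W$ at the origin as developed in \cite{Kyprianou14}.
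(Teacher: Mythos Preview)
Your route differs from the paper's. Both start from $J(a):=f_a(0)-H(0)$ and seek $J(a)<0$, but the paper never differentiates $J$. Instead it takes any fixed $a\in(0,y^*)$ (so that $g_2<0$ on $(0,a)$) and rewrites
\[
J(a)=\int_0^a g_2(z)W'(z)\Bigl(\tfrac{W(a)}{W'(a)}-\tfrac{W(z)}{W'(z)}\Bigr)\d z \;+\; g(0)\,\tfrac{W(a)}{W'(a)},
\]
then invokes the excursion-theoretic identity $W(x)/W'(x)=1/n(\overline{\epsilon}>x)$ from \cite{Kuznetsov12} to conclude that $W/W'$ is non-decreasing, making the integral $\leq 0$. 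In the bounded variation case $g(0)<0$ finishes it; in the unbounded variation case $g(0)=0$, and the paper argues by contradiction that if the integral vanished then $W/W'$ would be constant on $(0,a)$, forcing $W$ to be a pure exponential there with $W(0+)>0$, contradicting $W(0)=0$.

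Your identity $J'(a)=g(a)\bigl[W/W'\bigr]'(a)$ is algebraically correct, but using it carries genuine gaps under the paper's hypotheses. First, it requires $W/W'$ to be differentiable, i.e.\ essentially $W\in C^2(0,\infty)$, whereas (A2) only guarantees $W\in C^1(0,\infty)$. Second, even granting smoothness you need $\bigl[W/W'\bigr]'>0$ near $0$; you never invoke (or prove) the monotonicity of $W/W'$, and your heuristic $W/W'\sim a/\beta$ is not justified for general unbounded-variation processes without a Gaussian component --- this is precisely the obstacle you flag, and the paper's contradiction argument is how it is actually overcome. Third, your bounded-variation shortcut $J(0+)=W(0)^2 g_2(0)/W'(0)<0$ fails whenever $W'(0+)=\infty$, which occurs exactly when $\Pi(-\infty,0)=\infty$ (infinite jump activity); in that case $J(0+)=0$ and you are thrown back onto the derivative argument with the same unresolved issues. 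The paper's integral-plus-contradiction approach sidesteps all three problems simultaneously and needs only $W\in C^1$.
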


\begin{proof} Take any $a \in (0,y^*)$ (with $y^*$ as defined in Lemma \ref{lem_Z^y}). We will show that $f_a(0)<H(0)$ so that indeed 
\[ V(0) \leq \E \left[ H \left( Y^y_{\tau^0_a} \right) \right]=f_a(0)<H(0) \]
i.e. $0 \not\in \mathcal{S}$. For this, using the expression from Lemma \ref{lem_fa} we can write
\[ f_a(0)= H(0)-\int_0^a W(z)\left( 2\psi'(0)b+\psi''(0)-2\psi'(0)z-\frac{2\psi'(\Phi)}{\Phi}e^{-\Phi z}\right) \d z+g(a)\frac{W(a)}{W'(a)}. \]
Using the definition of $g_2$ from the proof of Lemma \ref{lem_g} and that $g'=W' g_2$ we can further develop this as
\begin{align}
f_a(0) &= H(0) - \int_0^a W(z)g_2(z) \d z +g(a)\frac{W(a)}{W'(a)} \nonumber\\
&= H(0) + \int_0^a g_2(z)W'(z) \left( \frac{W(a)}{W'(a)}-\frac{W(z)}{W'(z)} \right) \d z +g(0)\frac{W(a)}{W'(a)}. \label{13Apr5} 
\end{align}

From Lemma 2.3 in \cite{Kuznetsov12} we know that 
\[ \frac{W(x)}{W'(x)} = \frac{1}{n(\overline{\epsilon}>x)}, \]
where $n$ is the excursion measure of $X$ from its running supremum and $\overline{\epsilon}$ is the height of a generic excursion. In particular $W/W'$ is hence non-decreasing. Further, since $g_2<0$ on $(0,y^*)$ (cf. the proof of Lemma \ref{lem_g}) and $W'>0$ the integral in \eqref{13Apr5} is non-positive. 

Now, if $X$ has bounded variation then $g(0)<0$ (cf. the proof of Lemma \ref{lem_g}) and hence it indeed follows from \eqref{13Apr5} that $f_a(0)<H(0)$.

In the unbounded variation case we have $g(0)=0$ (cf. the proof of Lemma \ref{lem_g}) and we hence need to show that the integral in \eqref{13Apr5} is strictly negative. If this were not strictly negative then the integral would vanish, which since $W/W'$ is non-decreasing is only possible when $W/W'$ were equal to some constant $C>0$ on $(0,a)$. However this simple ODE only admits $W(x)=Ae^{x/C}$ for some $A>0$ as a positive solution, with $W(0+)=A>0$ and this is impossible since $W(0+)=W(0)=0$ in the unbounded variation case.
\end{proof}

\begin{lemma}\label{lem_stop3} Under \eqref{eq_case1}, $\mathcal{S} \cap (0,a^*)=\emptyset$.
\end{lemma}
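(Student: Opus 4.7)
The plan is to split the interval $(0,a^*)$ at $y^*$, where $y^*$ is the constant from Lemma \ref{lem_Z^y} (so $0<y^*<a^*$), and argue that $V(y)<H(y)$ separately on $[y^*,a^*)$ and on $(0,y^*)$; by definition of $\mathcal{S}$ this will give the claim.

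On $[y^*,a^*)$ I would use the suboptimal stopping time $\tau^y_{a^*}$, so $V(y)\leq f_{a^*}(y)$, and show $f_{a^*}(y)<H(y)$. Because $g(a^*)=0$ by Lemma \ref{lem_g}, the term $g(a^*)W(a^*-y)/W'(a^*)$ in the formula for $f_{a^*}(y)$ from Lemma \ref{lem_fa} drops out; applying the change of variables $u=x+y$ in each of $I_1(a^*-y)$, $I_2(a^*-y)$ and $e^{-\Phi y}I_3(a^*-y)$ then collapses the remaining terms into the compact form
\[
f_{a^*}(y)=H(y)-\int_y^{a^*}W(u-y)\, g_2(u)\,\d u,
\]
where $g_2$ is the auxiliary function from the proof of Lemma \ref{lem_g}. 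Since $g_2$ is strictly increasing with unique zero at $y^*$, for $y\geq y^*$ the integrand is non-negative on $(y,a^*)$ and strictly positive on the non-empty subinterval $(\max\{y,y^*\},a^*)$, so the integral is strictly positive. This yields $f_{a^*}(y)<H(y)$ and hence $y\notin\mathcal{S}$.

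For $y\in(0,y^*)$ the rule $\tau^y_{a^*}$ no longer obviously beats immediate stopping, so my plan is instead a short-time supermartingale argument applied directly to $H(Y^y_\cdot)$. From the proof of Lemma \ref{lem_Z^y} one reads off $\mathcal{A}f^y(l)=g_2(y-l)$ for $f^y(x)=H(y-x)$, and since $g_2$ is continuous with $g_2(y)<0$ (because $y<y^*$) one can fix $\delta\in(0,y)$ so that $\mathcal{A}f^y(l)<0$ on $[-\delta,\delta]$. Set $\tau_\delta:=\inf\{t\geq 0\,|\,|X_t|\geq\delta\}$; spectral negativity of $X$ gives $\overline{X}_{t\wedge\tau_\delta}\leq\delta<y$ throughout, so $Y^y_{t\wedge\tau_\delta}=y-X_{t\wedge\tau_\delta}$ and therefore $H(Y^y_{t\wedge\tau_\delta})=f^y(X_{t\wedge\tau_\delta})$. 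The It\^o decomposition from Lemma \ref{lem_Z^y}, stopped at $\tau_\delta$ and localised with a sequence $T_n\uparrow\infty$ for the local martingale part, then yields
\[
\E\!\left[f^y(X_{t\wedge\tau_\delta\wedge T_n})\right]=H(y)+\E\!\left[\int_0^{t\wedge\tau_\delta\wedge T_n}\mathcal{A}f^y(X_{s^-})\,\d s\right];
\]
assumption (A1) makes $\sup_{s\leq t}|X_s|$ have finite moments, which together with the quadratic growth of $H$ uniformly dominates the left-hand integrand, while the right-hand integrand is bounded on $[0,\tau_\delta]$. Dominated convergence lets one send $n\to\infty$, and since $\mathcal{A}f^y(X_{s^-})<0$ on $[0,\tau_\delta]$ with $\tau_\delta>0$ a.s., the right-hand integral is strictly negative for every $t>0$. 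Hence $\E[H(Y^y_{t\wedge\tau_\delta})]<H(y)$, giving $V(y)<H(y)$.

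The main technical obstacle is the integrability/dominated-convergence step in the second case: spectral negativity rules out only \emph{upward} jumps, so $X$ can jump arbitrarily far downwards within $[0,\tau_\delta]$, which combined with the quadratic growth of $H$ forces one to invoke the moment control provided by (A1) before removing the localiser $T_n$.
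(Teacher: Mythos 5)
Your proof is correct, and on part of the range it takes a genuinely different (and arguably cleaner) route than the paper. For $y\in(0,y^*)$ your argument is essentially the paper's: both exploit that the generator identity $\mathcal{A}f^y(l)=g_2(y-l)$ gives a strictly negative drift while $X$ stays near $0$, apply the local-martingale decomposition from Lemma \ref{lem_Z^y} to a stopped/localised version of $H(Y^y_\cdot)=f^y(X_\cdot)$, and conclude $V(y)<H(y)$; the paper stops $Y^y$ on exit from $[y_0,y^*]$ and simply evaluates at the localised time $\kappa\wedge T_n$ (itself an admissible stopping time), whereas you stop $X$ on exit from $[-\delta,\delta]$ and then remove the localiser by dominated convergence --- your extra DCT step is valid under (A1) but not actually needed, since $t\wedge\tau_\delta\wedge T_n$ already witnesses $V(y)<H(y)$ for $n$ large. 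The real divergence is on $[y^*,a^*)$: the paper splits into the bounded-variation case (a continuous-fit computation, $\lim_{\eps\downarrow0}f_{y+\eps}(y)=H(y)+W(0)g(y)/W'(y)<H(y)$) and the unbounded-variation case (a contradiction via Lemma \ref{lem_stop1} and the one-sided derivative comparison $f_y'(y-)>H'(y)$), while you use the single candidate $\tau^y_{a^*}$ and the identity
\[
f_{a^*}(y)=H(y)-\int_y^{a^*}W(u-y)\,g_2(u)\,\d u,
\]
which follows from Lemma \ref{lem_fa} since $g(a^*)=0$ kills the boundary term; as $g_2$ is strictly increasing with its unique zero at $y^*$ (this is exactly the defining equation of $y^*$ in Lemma \ref{lem_Z^y}) and $W>0$ on $(0,\infty)$, the integrand is strictly positive on $(y,a^*)$ for $y\geq y^*$, giving $V(y)\leq f_{a^*}(y)<H(y)$ directly. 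This buys you a uniform treatment of the bounded- and unbounded-variation cases and avoids both the contradiction argument and any appeal to Lemma \ref{lem_stop1}; the paper's version, in exchange, makes the continuous/smooth-fit mechanism at the boundary explicit.
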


\begin{proof} First consider the bounded variation case. For any $y \in (0,a^*)$ it holds that
\[ V(y) \leq \lim_{\eps \downarrow 0} \E \left[ H \left( Y^y_{\tau^y_{y+\eps}} \right) \right] = \lim_{\eps \downarrow 0} f_{y+\eps}(y) = H(y)+\frac{W(0)}{W'(y)}g(y), \]
where the first inequality is just by definition of $V$ and further we used Lemma \ref{lem_fa}. Since $W(0)>0$, $W'(y)>0$ and $g(y) <0$ (cf. Lemma \ref{lem_g}) it follows that $V(y)<H(y)$ i.e. $y \not\in \mathcal{S}$.

Next we consider the unbounded variation case. We first show that $\mathcal{S} \cap (0,y^*)=\emptyset$, with $y^* \in (0,a^*)$ as defined in Lemma \ref{lem_Z^y}. Fix $y \in (0,y^*)$. Pick some $y_0 \in (0,y)$ and $t_0>0$, and define the stopping time
\[ \kappa = \inf \{ t \geq 0 \, | \, Y^y_t \not\in [y_0,y^*] \} \wedge t_0 = \inf \{ t \geq 0 \, | \, y-X_t \not\in [y_0,y^*] \} \wedge t_0. \]
Using that for any $t \leq \kappa$  the process $Y^y$ is strictly positive, we can use the same  notation and decomposition as in Lemma \ref{lem_Z^y} to write 
\[ H(Y^y_t) = f^y(X_t)= H(y)+M^y_t + \int_0^t \mathcal{A}f^y(X_{s^-}) \d s \quad \text{for $t \in [0,\kappa]$,} \]
where $M^y$ is a local martingale whose localising sequence we denote by $\{T_n\}_{n \in \N}$. By construction of $y^*$ we have $\mathcal{A}f^y(X_{t^-})<0$ for $t \leq \kappa$, and hence for any $n \in \N$ it follows from the Optional Sampling Theorem that $\E[H(Y^y_{\kappa\wedge T_n})]<H(y)$. It follows that 
\[ V(y) = \inf_\tau \E \left[ H \left( Y^y_{\tau} \right) \right] \leq \E \left[ H \left( Y^y_{\kappa\wedge T_n} \right) \right] < H(y) \]
i.e. $y \not\in \mathcal{S}$. 

It remains to show that $\mathcal{S} \cap [y^*,a^*)=\emptyset$. If this were not true, then by virtue of Lemma \ref{lem_stop1} it must be the case that $a_0<a^*$ and that $[a_0,a^*] \subseteq \mathcal{S}$. However fix some $y \in (a_0,a^*)$. Then, using Lemma \ref{lem_fa} together with $W(0)=0$, $W'(0)>0$ and $g(y)<0$ (cf. Lemma \ref{lem_g})
\[ f_y(y-)=H(y)+\frac{W(0)}{W'(y)}g(y)=H(y)=f_y(y) \quad \text{and} \quad f_y'(y-)=H'(y)+\frac{W'(0)}{W'(y)}g(y)>H'(y) \]
and hence for all $\eps>0$ small enough 
\[ V(y-\eps) = \inf_\tau \E \left[ H \left( Y^{y-\eps}_\tau \right) \right] \leq \E \left[ H \left( Y^{y-\eps}_{\tau^{y-\eps}_y} \right) \right] =f_y(y-\eps)<H(y-\eps) \]
i.e. $y-\eps \not\in \mathcal{S}$. However this contradicts with $[a_0,a^*] \subseteq \mathcal{S}$.
\end{proof}

\begin{lemma}\label{lem_opt1} Under \eqref{eq_case1}, $\mathcal{S} =[a^*,\infty)$.
\end{lemma}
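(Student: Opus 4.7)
The plan is to combine the three preceding lemmas to pin down $\mathcal{S}$ as $[a_0,\infty)$ for some $a_0\in[a^*,b]$, and then to force $g(a_0)=0$ by reading off the behaviour of $f_{a_0}(y)-H(y)$ at $y=a_0^-$; by Lemma \ref{lem_g}(ii) this identifies $a_0=a^*$.

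First I would assemble $\mathcal{S}\cap[y^*,\infty)=[a_0,\infty)$ from Lemma \ref{lem_stop1}, $0\notin\mathcal{S}$ from Lemma \ref{lem_stop2}, and $\mathcal{S}\cap(0,a^*)=\emptyset$ from Lemma \ref{lem_stop3}. Combined with $y^*<a^*$ from Lemma \ref{lem_Z^y}(ii), these imply $\mathcal{S}=[a_0,\infty)$ with $a_0\in[a^*,b]$, and Lemma \ref{lem_CS} then gives $V=f_{a_0}$ on $[0,a_0)$ together with continuity of $V$ throughout $\R_{\geq 0}$.

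Next I would split into the two variation cases. In the bounded variation case ($W(0)>0$), passing $y\uparrow a_0$ in the formula of Lemma \ref{lem_fa} yields $\lim_{y\uparrow a_0}f_{a_0}(y)=H(a_0)+g(a_0)W(0)/W'(a_0)$; continuity of $V$ together with $V(a_0)=H(a_0)$ immediately forces $g(a_0)=0$. In the unbounded variation case ($W(0)=0$) continuity is automatic and hence uninformative, so I would instead differentiate: $V-H\leq 0$ on $[0,a_0]$ with equality at $a_0$ makes the left derivative $(V-H)'(a_0-)$ non-negative. A direct computation from Lemma \ref{lem_fa}, in which the $I_1,I_2,I_3$ contributions all vanish at $y=a_0^-$ because $W(0)=0$, gives $(V-H)'(a_0-)=-g(a_0)W'(0+)/W'(a_0)$, so non-negativity forces $g(a_0)\leq 0$; combined with $g(a_0)\geq 0$ (from $a_0\geq a^*$ and Lemma \ref{lem_g}) this produces $g(a_0)=0$.

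The main obstacle is the unbounded variation case, specifically making the derivative argument rigorous when $W'(0+)=\infty$. In that regime $f_{a_0}'(a_0-)$ must be read as $-\infty$ when $g(a_0)>0$, so the cleanest way to phrase the argument is in terms of difference quotients $(V(y)-H(y))/(y-a_0)$, which remain $\geq 0$ for $y<a_0$ since both numerator and denominator are non-positive; taking $\liminf$ as $y\uparrow a_0$ and using the $g(a_0)W(a_0-y)/W'(a_0)$ term still yields $g(a_0)\leq 0$. Once this is pinned down, the overall structure is the principle of continuous pasting (bounded variation) and smooth pasting (unbounded variation) for spectrally negative Lévy processes, instantiated through the scale-function identity $g(a_0)=0$.
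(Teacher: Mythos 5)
Your proof is correct and follows essentially the same route as the paper: Lemmas \ref{lem_stop1}--\ref{lem_stop3} yield $\mathcal{S}=[a_0,\infty)$ with $a_0\in[a^*,b]$, and then the value-matching (bounded variation) resp.\ left-derivative/smooth-fit (unbounded variation) computation from Lemma \ref{lem_fa} at $y=a_0^-$ forces $g(a_0)=0$, so Lemma \ref{lem_g} gives $a_0=a^*$ --- the paper merely packages this as a contradiction with $V\leq H$ under the assumption $a_0>a^*$. Your difference-quotient treatment of the case $W'(0+)=\infty$ is a welcome refinement that the paper leaves implicit.
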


\begin{proof} From Lemmas \ref{lem_stop1}, \ref{lem_stop2} \& \ref{lem_stop3} it follows that $\mathcal{S}=[a_0,\infty)$ for some $a_0 \geq a^*$, and hence $V(y)=f_{a_0}(y)$ for all $y \geq 0$. It remains to show that $a_0 \leq a^*$. Suppose that $a_0>a^*$.

In the bounded variation case it follows that, using Lemma \ref{lem_fa}
\[ V(a_0-)=f_{a_0}(a_0-)=H(a_0)+g(a_0) \frac{W(0)}{W'(a_0)}>H(a_0), \]
the inequality since $W(0)>0$ and $g(a_0)>0$ (cf. Lemma \ref{lem_g}). But this contradicts $V \leq H$.

In the unbounded variation case we can similarly use Lemma \ref{lem_fa} to see that the left derivative at $a_0$ equals
\[ V_-'(a_0)=H'(a_0)-g(a_0) \frac{W'(0)}{W'(a_0)}<H'(a_0), \]
the inequality again using that $g(a_0)>0$ and that $W'(0)>0$. Combined with $V(a_0)=H(a_0)$ this again contradicts $V \leq H$.
\end{proof}

It remains now to wrap up the other case in Theorem \ref{thm_main} i.e. that
\be{eq_case2}
b \leq \frac{\psi'(\Phi)/\Phi-\psi''(0)/2}{\psi'(0)}.
\ee

\begin{lemma}\label{lem_opt2} Under \eqref{eq_case2}, $\mathcal{S}=[0,\infty)$.
\end{lemma}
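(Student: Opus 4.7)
Since $V\leq H$ holds trivially via $\tau=0$, it suffices to establish $V(y) \geq H(y)$ for every $y \geq 0$, which then yields $\mathcal{S} = [0,\infty)$. The strategy is to upgrade the local-submartingale statement of Lemma \ref{lem_Z^y}(i), which only concerns $Z^y_{t\wedge\kappa_y^0}$, into an It\^o decomposition for $H(Y^y_t)$ valid on all of $[0,\infty)$, and then to conclude by optional sampling.

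Differentiating \eqref{def_quad_H} gives $H'(y) = 2(y-b)(1-e^{-\Phi y})$, so in particular $H'(0) = 0$; since $H''(0+)=-2b\Phi$ is finite, $H$ admits a smooth extension past $0$. Writing $Y^y_t = (y\vee\overline{X}_t) - X_t$ and applying the It\^o--Meyer formula to $H(Y^y_t)$, the extra boundary term relative to an ordinary It\^o computation is $\int_0^t H'(Y^y_{s^-})\,\mathrm{d}(y\vee\overline{X})_s$; since $y\vee\overline{X}$ is continuous and only grows when $Y^y=0$, this equals $H'(0)\cdot[(y\vee\overline{X})_t - y] = 0$. All remaining terms are precisely those that produced $\mathcal{A}f^y(l)$ in the proof of Lemma \ref{lem_Z^y}, now with $u := Y^y_{s^-}$ in the role of $y-l$. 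This yields
\[
H(Y^y_t) = H(y) + \widetilde{M}^y_t + \int_0^t G(Y^y_{s^-}) \, \mathrm{d}s,
\]
where $\widetilde{M}^y$ is a local martingale and $G(u) := \psi''(0) - \tfrac{2}{\Phi}\psi'(\Phi)\,e^{-\Phi u} - 2(u-b)\psi'(0)$. Under \eqref{eq_case2} the same rearrangement that appeared in the proof of Lemma \ref{lem_Z^y}(i) gives $G(0)\geq 0$, and since $G'(u) = 2\psi'(\Phi)e^{-\Phi u} - 2\psi'(0) > 0$ the function $G$ is strictly increasing, so $G \geq 0$ throughout $[0,\infty)$. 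Thus $H(Y^y_\cdot) - H(y)$ is a local submartingale on the whole of $[0,\infty)$.

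To finish, fix any stopping time $\tau$, let $\{T_n\}$ localise $\widetilde{M}^y$, and argue as in the proof of Lemma \ref{lem_stop1}: the Optional Sampling Theorem applied at $\tau\wedge T_n\wedge \tau^y_a\wedge t$ gives $\mathbb{E}[H(Y^y_{\tau\wedge T_n\wedge \tau^y_a\wedge t})] \geq H(y)$; sending $t,n\to\infty$ by dominated convergence (with the integrable majorant $H(Y^y_{\tau^y_a})$ whose finiteness follows from Lemma \ref{lem_fa}) and then $a\to\infty$ via Fatou together with $H\geq 0$ yields $\mathbb{E}[H(Y^y_\tau)] \geq H(y)$. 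Hence $V(y) \geq H(y)$, which is the desired inequality. The main obstacle is the first step: carefully justifying that the reflection term vanishes and that the drift of $H(Y^y)$ is exactly $G$ on the entire half-line, because Lemma \ref{lem_Z^y} a priori only controls the process up to $\kappa_y^0$ and in particular gives nothing at $y=0$, where the argument via \eqref{Kevin31} collapses.
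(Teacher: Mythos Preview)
Your argument is correct and takes a genuinely different route from the paper. The paper's proof is very short and simply recycles earlier machinery: it invokes the argument of Lemma~\ref{lem_stop1} (now with Lemma~\ref{lem_Z^y}(i) playing the role of Lemma~\ref{lem_Z^y}(ii)) to conclude that $\mathcal{S}=[a_0,\infty)$ for some $a_0\in[0,b]$, and then applies the contradiction argument of Lemma~\ref{lem_opt1} together with $g>0$ on $(0,\infty)$ from Lemma~\ref{lem_g}(i) to rule out $a_0>0$. In particular the paper never looks at $H(Y^y)$ beyond the first visit to $0$; instead it relies on the scale-function identity $V=f_{a_0}$ and the sign of $g(a_0)$.

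Your approach, by contrast, upgrades the It\^o decomposition to the full reflected process by exploiting $H'(0)=0$ so that the boundary (local-time) contribution vanishes, and then shows directly that the drift $G$ is nonnegative everywhere under \eqref{eq_case2}. This is exactly the ``normal reflection'' mechanism that the paper mentions only as a side remark in Section~\ref{sec_exam}. What you gain is a self-contained verification that $\mathbb{E}[H(Y^y_\tau)]\geq H(y)$ for every stopping time, bypassing the indirect route through $f_{a_0}$ and the sign analysis of $g$; in particular your argument works uniformly at $y=0$, where, as you note, the stopped-process statement of Lemma~\ref{lem_Z^y}(i) is vacuous. What the paper gains is economy: it needs no new It\^o computation at all, since Lemmas~\ref{lem_stop1} and~\ref{lem_opt1} are already in place for case~(ii). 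Both proofs still appeal to Lemma~\ref{lem_fa} for the integrable majorant $H(Y^y_{\tau^y_a})$ in the optional-sampling step, so neither is entirely free of scale functions.
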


\begin{proof} By the same arguments as in Lemma \ref{lem_stop1} we have that $b \in \mathcal{S}$ and that if $y \in \mathcal{S}$ then also $[y,\infty) \subseteq \mathcal{S}$. Hence $\mathcal{S}=[a_0,\infty)$ for some $a_0 \in [0,b]$. Using that now $g>0$ on $(0,\infty)$ (cf. Lemma \ref{lem_g}), the same arguments as in Lemma \ref{lem_opt1} can be used to rule out $a_0>0$. 
\end{proof}

\section{Some examples}\label{sec_exam}

We conclude by discussing some explicit examples. In the below, recall that the function $f_a$ was defined in Lemma \ref{lem_fa}. Further, in case (ii) of Theorem \ref{thm_main} we may write 
\[ V(y)=\inf_{a>0} f_a(y)=f_{a^*}(y) \quad \text{for all $y \geq 0$}, \] 
where $a^*$ is the unique root of the function $g$ (cf. Lemma \ref{lem_g}). Since we have derived expressions for both $f_a$ and $g$ in terms of scale functions, for any spectrally negative L\'evy process with known scale functions we also have explicit expressions for $f_a$ and $g$, and a standard numerical root finding routine can be used to (approximately) compute $a^*$.  

First we consider the bounded variation spectrally negative L\'evy process $X$ given by
\be{eq_proc_ex1} 
X_t= ct-\sum_{k=1}^{N_t} Z_k \quad \text{for all $t \geq 0$,} 
\ee
where $c>0$, $(N_t)_{t \geq 0}$ is a Poisson process with intensity $\mu>0$ and the $Z_k$'s are iid with a common $\text{Exp}(\eta)$ distribution for some $\eta>0$. We assume $\mu>c \eta$ so that the conditions in \eqref{eq_ult_sup} hold.

It is well known (and easily verified) that
\[ \psi(z)=cz-\mu+\frac{\mu \eta}{z+\eta} \quad \text{with domain $(-\eta,\infty)$.} \]
Note that both assumptions (A1) and (A2) are satisfied, and that $\Phi=\mu/c-\eta$. Further it is well known (and again easily verified) that
\[ W(x) = \frac{\mu}{c(\mu -\eta c)}e^{\left(\mu/c-\eta\right)x}-\frac{\eta}{\mu-\eta c} \quad \text{for $x\geq 0$}. \]

For the particular choices $c=0.5$, $\mu=1$, $\eta=1$ and $b=5$ we are in case (ii) of Theorem \ref{thm_main} and we can compute $a^* \approx 3.995<b=5$. 

Figure \ref{fig:fayCPP} illustrates this situation, where the functions $H$ and $f_a$ for several values of $a$ are plotted, including $V=f_{a^*}$. Of course, $V=f_{a^*}$ is dominated by every other $f_a$. Note that each $f_a$ is constant for $y \in [0,a)$ due to the fact that $Y^y$ moves upwards by jumps only and that due to the lack of memory property of the Exponentially distributed jumps the overshoot over the level $a$ does not depend on the position just prior to the jump causing the overshoot. Further it is worth noting that $V=f_{a^*}$ is the only choice of $a \geq 0$ that results in a continuous function, for $a \not= a^*$ the function $f_a$ experiences a discontinuity in $y=a$. This phenomenon is well known in optimal stopping (for processes of bounded variation) and is usually referred to as `continuous fit' or `continuous pasting' (see e.g. \cite{Alili05}).

\begin{figure}
\centering
\includegraphics[trim={0 0 3cm 1cm},clip]{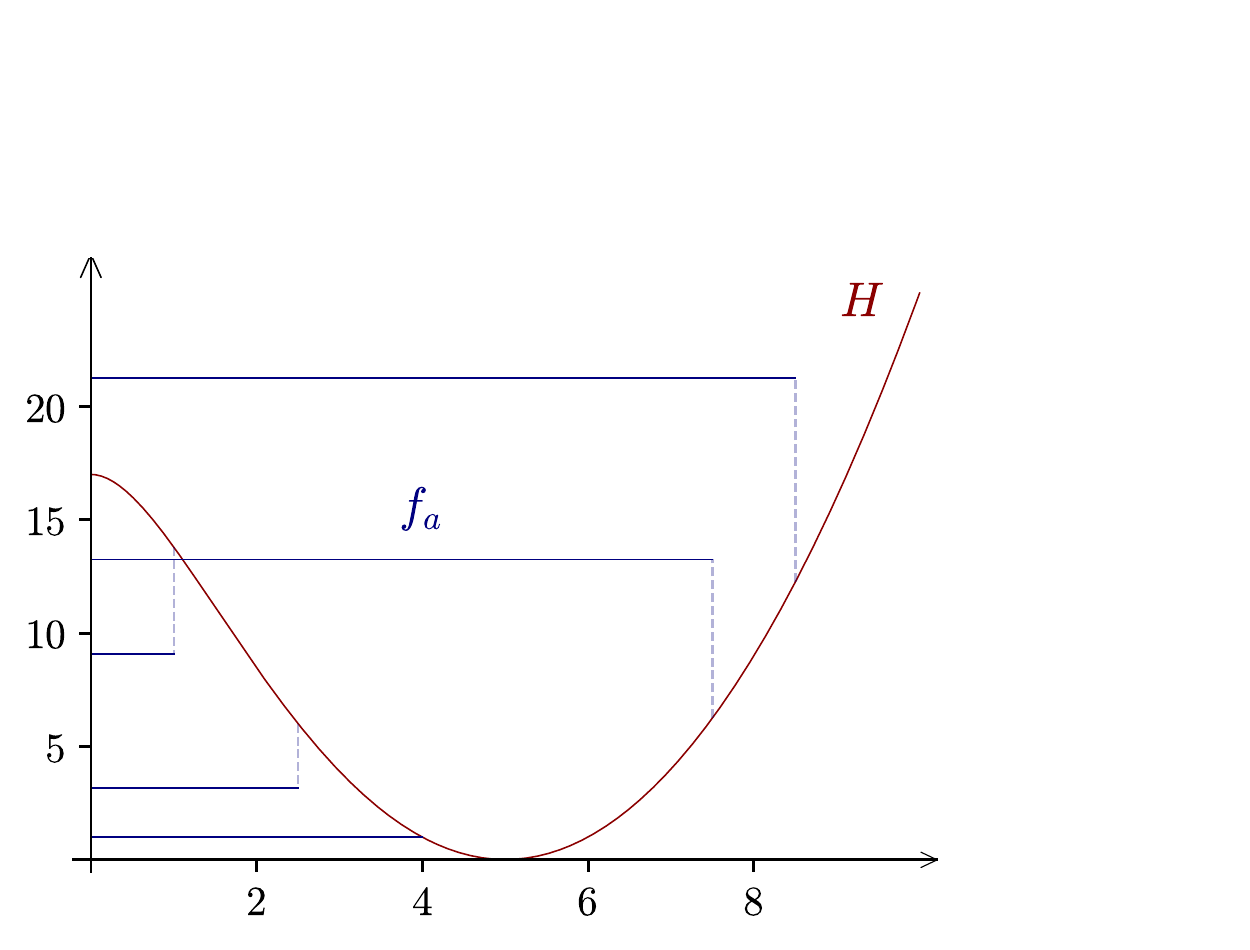}
\caption{A plot of $H$ and $f_a$ for several values of $a$ for the bounded variation process \eqref{eq_proc_ex1} with $c=0.5$, $\mu=1$, $\eta=1$ and $b=5$}
\label{fig:fayCPP}
\end{figure} 

Next we add a Brownian motion $B$ to the above example to create a process of unbounded variation:
\be{eq_proc_ex2} 
X_t= \sigma B_t + ct-\sum_{k=1}^{N_t} Z_k \quad \text{for all $t \geq 0$.} 
\ee
It is again easily verified that (see also e.g. \cite{Kuznetsov12})
\[ \psi(z)=\frac{\sigma^2}{2} z^2 + cz-\frac{\mu z}{z+\eta} \quad \text{with domain $(-\eta,\infty)$} \]
and denoting the three distinct roots of $\psi$ by $z_i$
\[ W(x) = \frac{e^{z_1 x}}{\psi'(z_1)}+\frac{e^{z_2 x}}{\psi'(z_2)}+\frac{e^{z_3 x}}{\psi'(z_3)} \quad \text{for $x \geq 0$.} \]
Choosing $\sigma=1$, $c=0.5$, $\mu=1$, $\eta=1$ and $b=5$ we are again in case (ii) of Theorem \ref{thm_main} and we can compute $a^* \approx 4.38<b=5$.

Figure \ref{fig:fayBM} again shows plots of $H$ and $f_a$ for several values of $a$. Noteworthy is that now all the $f_a$'s are continuous, and that $V=f_{a^*}$ distinguishes itself from $f_a$ for other values of $a$ via a $C^1$ fit to $H$ at $y=a$ rather than only a continuous fit. This is known as `smooth fit' or `smooth pasting' (see again \cite{Alili05} e.g.).

\begin{remark} It is worth noting that in Figures \ref{fig:fayCPP} \& \ref{fig:fayBM} the functions $f_a$ seem to have a vanishing derivative in $y=0$. Indeed a bit of algebra with the expressions from Lemma \ref{lem_fa} reveals that in general $f_a'(0+)=0$. This property is usually referred to as `normal reflection' or `instantaneous reflection' (see e.g. p. 264 in \cite{Peskir06} or equation (3.32) in \cite{DuToit09}) and naturally arises as follows. Using the strong Markov property it can be checked for any $t \geq 0$ that
\[ \E \left[ \left. H \left( Y^y_{\tau^y_a} \right) \, \right| \, \mathcal{F}_t \right] = f_a \left( Y^y_{t \wedge \tau^y_a} \right) =: Z_t \]
i.e. that $Z$ is martingale. (Note that the Snell envelope for our problem is contained in this family, for $a=y^*$). If the normal reflection property would not hold, then the (local) time $Y^y$ accumulates at level $0$ would destroy this martingale property. In a more pure free boundary approach to problems of this type normal reflection is typically included as one of the conditions imposed on the solution to the free boundary problem. However in the approach we have chosen in this paper, where in particular the proofs only consider processes that are stopped before $Y^y$ hits the level $0$, normal reflection appears here as a property of the solution rather than as a condition imposed on the solution.
\end{remark}

\begin{figure}[!hb]
\centering
\includegraphics[trim={0 0 1cm 1cm},clip]{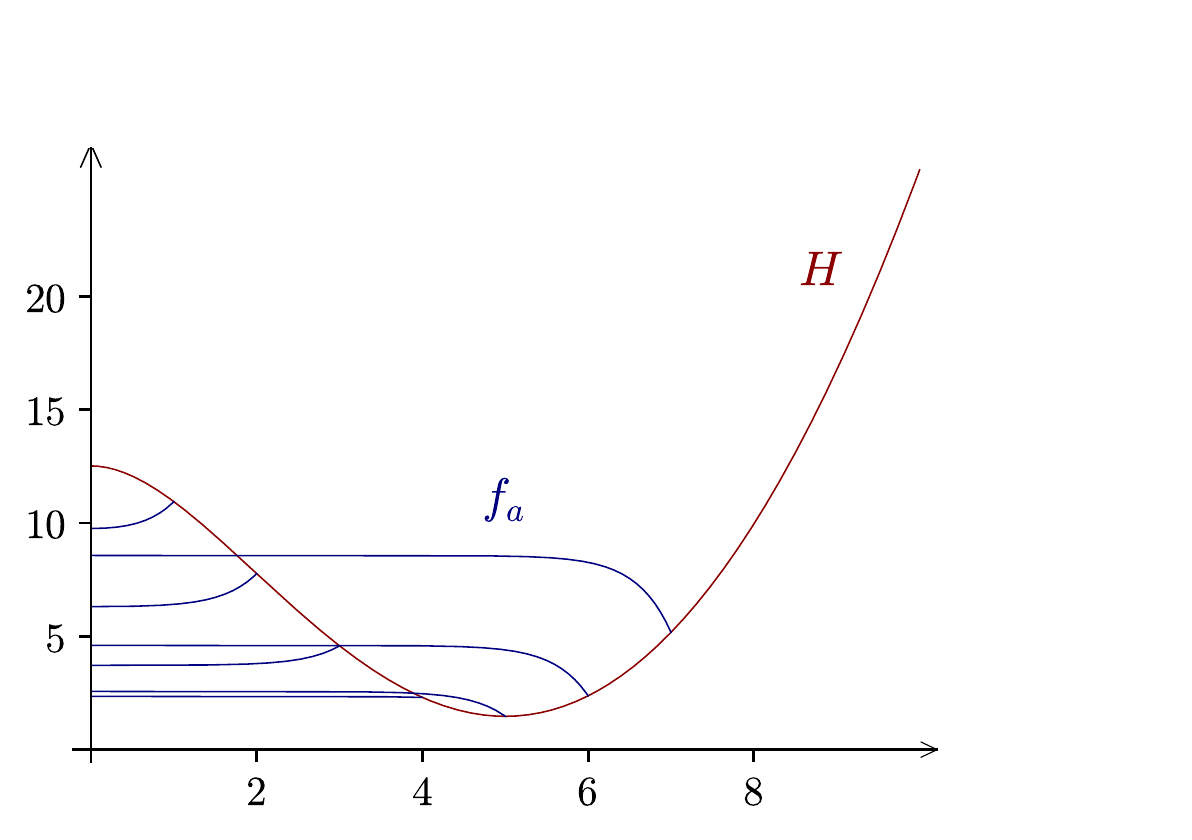}
\includegraphics[trim={0 0 1cm 1cm},clip]{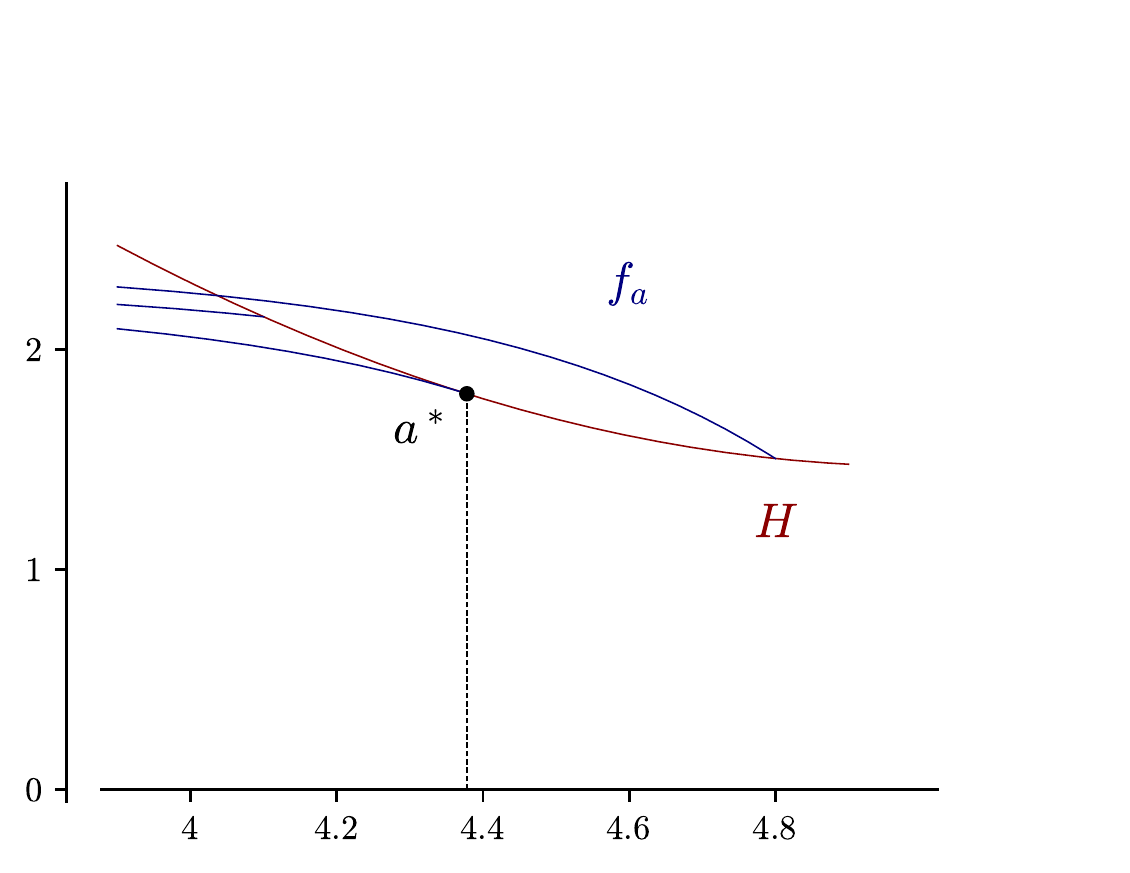}
\caption{A plot of $H$ and $f_a$ for several values of $a$ for the unbounded variation process \eqref{eq_proc_ex2} with $\sigma=1$, $c=0.5$, $\mu=1$, $\eta=1$ and $b=5$}
\label{fig:fayBM}
\end{figure}

\clearpage

\end{document}